\begin{document}

\newtheorem{theorem}{Theorem}
\newtheorem{lemma}[theorem]{Lemma}
\newtheorem{claim}[theorem]{Claim}
\newtheorem{cor}[theorem]{Corollary}
\newtheorem{conj}[theorem]{Conjecture}
\newtheorem{prop}[theorem]{Proposition}
\newtheorem{definition}{Definition}
\newtheorem{question}[theorem]{Question}
\newcommand{\hh}{{{\mathrm h}}}

\numberwithin{equation}{section}
\numberwithin{theorem}{section}
\numberwithin{algorithm}{section}
\numberwithin{table}{section}
\numberwithin{figure}{section}

\renewcommand*{\backref}[1]{}
\renewcommand*{\backrefalt}[4]{%
    \ifcase #1 (Not cited.)%
    \or        (p.\,#2)%
    \else      (pp.\,#2)%
    \fi}

\def\sssum{\mathop{\sum\!\sum\!\sum}}
\def\ssum{\mathop{\sum\ldots \sum}}
\def\iint{\mathop{\int\ldots \int}}

\def\squareforqed{\hbox{\rlap{$\sqcap$}$\sqcup$}}
\def\qed{\ifmmode\squareforqed\else{\unskip\nobreak\hfil
\penalty50\hskip1em\null\nobreak\hfil\squareforqed
\parfillskip=0pt\finalhyphendemerits=0\endgraf}\fi}


\newcommand{\bflambda}{{\boldsymbol{\lambda}}}
\newcommand{\bfmu}{{\boldsymbol{\mu}}}
\newcommand{\bfxi}{{\boldsymbol{\xi}}}
\newcommand{\bfrho}{{\boldsymbol{\rho}}}

\def\fK{\mathfrak K}
\def\fT{\mathfrak{T}}

\def\fA{{\mathfrak A}}
\def\fB{{\mathfrak B}}
\def\fC{{\mathfrak C}}

\def \balpha{\bm{\alpha}}
\def \bbeta{\bm{\beta}}
\def \bgamma{\bm{\gamma}}
\def \blambda{\bm{\lambda}}
\def \bchi{\bm{\chi}}
\def \bphi{\bm{\varphi}}
\def \bpsi{\bm{\psi}}

\def\eqref#1{(\ref{#1})}

\def\vec#1{\mathbf{#1}}


\def\cA{{\mathcal A}}
\def\cB{{\mathcal B}}
\def\cC{{\mathcal C}}
\def\cD{{\mathcal D}}
\def\cE{{\mathcal E}}
\def\cF{{\mathcal F}}
\def\cG{{\mathcal G}}
\def\cH{{\mathcal H}}
\def\cI{{\mathcal I}}
\def\cJ{{\mathcal J}}
\def\cK{{\mathcal K}}
\def\cL{{\mathcal L}}
\def\cM{{\mathcal M}}
\def\cN{{\mathcal N}}
\def\cO{{\mathcal O}}
\def\cP{{\mathcal P}}
\def\cQ{{\mathcal Q}}
\def\cR{{\mathcal R}}
\def\cS{{\mathcal S}}
\def\cT{{\mathcal T}}
\def\cU{{\mathcal U}}
\def\cV{{\mathcal V}}
\def\cW{{\mathcal W}}
\def\cX{{\mathcal X}}
\def\cY{{\mathcal Y}}
\def\cZ{{\mathcal Z}}
\newcommand{\rmod}[1]{\: \mbox{mod} \: #1}

\newcommand{\sA}{\ensuremath{\mathscr{A}}}
\newcommand{\sB}{\ensuremath{\mathscr{B}}}
\newcommand{\sC}{\ensuremath{\mathscr{C}}}
\newcommand{\sG}{\ensuremath{\mathscr{G}}}
\newcommand{\sH}{\ensuremath{\mathscr{H}}}

\def\cg{{\mathcal g}}

\def\vr{\mathbf r}

\def\e{{\mathbf{\,e}}}
\def\ep{{\mathbf{\,e}}_p}
\def\er{{\mathbf{\,e}}_r}

\def\Tr{{\mathrm{Tr}}}
\def\Nm{{\mathrm{Nm}}}

\def\bc{{\mathbf{c}}}
\def\bC{{\mathbf{C}}}

 \def\SS{{\mathbf{S}}}

\def\lcm{{\mathrm{lcm}}}

\def\({\left(}
\def\){\right)}
\def\fl#1{\left\lfloor#1\right\rfloor}
\def\rf#1{\left\lceil#1\right\rceil}

\def\mand{\quad \mbox{and} \quad}

\definecolor{olive}{rgb}{0.3, 0.4, .1}
\definecolor{dgreen}{rgb}{0.,0.6,0.}

\newcommand{\commB}[1]{\marginpar{%
\begin{color}{red}
\vskip-\baselineskip 
\raggedright\footnotesize
\itshape\hrule \smallskip B: #1\par\smallskip\hrule\end{color}}}

\newcommand{\commD}[1]{\marginpar{%
\begin{color}{dgreen}
\vskip-\baselineskip 
\raggedright\footnotesize
\itshape\hrule \smallskip D: #1\par\smallskip\hrule\end{color}}}

\newcommand{\commI}[1]{\marginpar{%
\begin{color}{blue}
\vskip-\baselineskip 
\raggedright\footnotesize
\itshape\hrule \smallskip I: #1\par\smallskip\hrule\end{color}}}

\newcommand{\commM}[1]{\marginpar{%
\begin{color}{magenta}
\vskip-\baselineskip 
\raggedright\footnotesize
\itshape\hrule \smallskip M: #1\par\smallskip\hrule\end{color}}}




\hyphenation{re-pub-lished}

\mathsurround=1pt

\def\bfdefault{b}
\overfullrule=5pt

\def \bG{{\mathbf G}}

\def \F{{\mathbb F}}
\def \K{{\mathbb K}}
\def \Z{{\mathbb Z}}
\def \Q{{\mathbb Q}}
\def \R{{\mathbb R}}
\def \C{{\\mathbb C}}
\def\Fp{\F_p}
\def \fp{\Fp^*}

\def\Kmn{\cK_p(m,n)}
\def\psmn{\psi_p(m,n)}
\def\SpAB{\cS_{a,p}(\cA,\cB;\cI,\cJ)}
\def\SrAB{\cS_{a,r}(\cA,\cB;\cI,\cJ)}
\def\SpkAB{\cS_{a,p^k}(\cA,\cB;\cI,\cJ)}

\def\SpA{\cS_{a,p}(\cA;\cI,\cJ)}
\def\SrA{\cS_{a,r}(\cA;\cI,\cJ)}
\def\SpkA{\cS_{a,p^k}(\cA,\cI,\cJ)}

\def\Sp{\cS_{a,p}(\cI,\cJ)}
\def\Sr{\cS_{a,r}(\cI,\cJ)}
\def\Spk{\cS_{a,p^k}(\cI,\cJ)}

\def\RpIJ{R_{a,p}(\cI,\cJ)}
\def\RrIJ{R_{a,r}(\cI,\cJ)}
\def\RpkIJ{R_{a,p^k}(\cI,\cJ)}

\def\TpIJ{T_{a,p}(\cI,\cJ)}
\def\TrIJ{T_{a,r}(\cI,\cJ)}
\def\TpkIJ{T_{a,p^k}(\cI,\cJ)}
\def \xbar{\overline x_p}

\title[Functional Graphs of Quadratic Polynomials]{On Functional Graphs of Quadratic Polynomials}

\author[B. Mans]{Bernard Mans}
\address{B.M.: Department of Computing, Macquarie University, Sydney, NSW 2109, Australia}
\email{bernard.mans@mq.edu.au}

\author[M. Sha]{Min Sha}
\address{M.S.: Department of Computing, Macquarie University, Sydney, NSW 2109, Australia}
\email{shamin2010@gmail.com}

\author[I.~E.~Shparlinski]{Igor E. Shparlinski}
\address{I.S.: Department of Pure  Mathematics, University of New South Wales, Sydney, NSW 2052, Australia}
\email{igor.shparlinski@unsw.edu.au}

\author[D. Sutantyo]{Daniel Sutantyo}
\address{D.S.: Department of Computing, Macquarie University, Sydney,\linebreak NSW 2109, Australia}
\email{daniel.sutantyo@gmail.com}

\begin{abstract}
We study functional graphs generated by quadratic polynomials over prime fields.
We introduce efficient algorithms for methodical computations and provide the values of various direct and cumulative statistical parameters of interest. These include: the number of connected functional graphs,
  the number of graphs having a maximal cycle,
  the number of cycles of fixed size,
  the number of components of fixed size,
  as well as the shape of trees extracted from functional graphs. We particularly focus on connected functional  graphs, that is, the graphs which
contain only one component (and thus only one cycle).
Based on the results of our computations, we formulate several conjectures highlighting the similarities and differences between these functional graphs and random mappings.
\end{abstract}

\keywords{Polynomial maps, functional graphs, finite fields, random maps, algorithms}

\subjclass[2010]{05C20, 05C85,   11T24}

\maketitle

\section{Introduction}

Let $\F_q$ be the finite field of $q$ elements
and
of characteristic $p$, with $p\ge 3$.
For a function $f:\F_q \to \F_q$, we define
the functional graph of $f$ as a directed graph $\cG_f$ on $q$ nodes
labelled by the elements of $\F_q$ where
there is an edge from $u$ to $v$ if and only if $f(u) = v$.
For any integer $n\ge 1$, let $f^{(n)}$ be the $n$-th iteration of $f$.

These graphs are particular as one can immediately observe that each connected component
of the graph $\cG_f$  has a unique cycle
(we treat fixed points as cycles of length $1$).
An example for the functional graph of $x^2+12 \pmod{31}$ is given in Figure~\ref{pic:connected_graph}.

                            				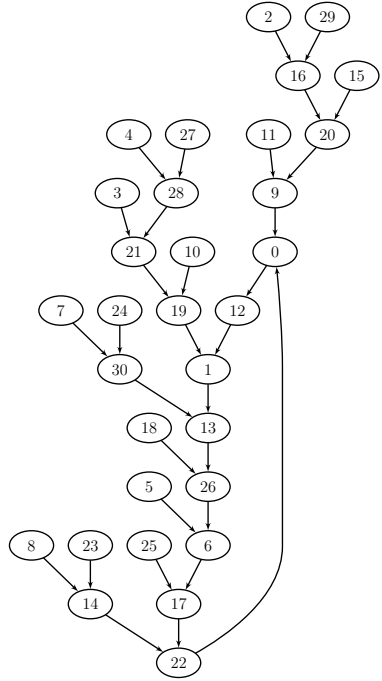
\begin{figure}
                            				\resizebox{5cm}{9cm}{%
                            					\begin{tikzpicture}[>=latex',line join=bevel,scale=0.60]
                            						\pgfsetlinewidth{1bp}
                            					\pgfsetcolor{black}
                            						\draw [->] (416.65bp,720.76bp) .. controls (412.29bp,712.28bp) and (406.85bp,701.71bp)  .. (397.3bp,683.15bp);
                            						\draw [->] (207.0bp,71.697bp) .. controls (207.0bp,63.983bp) and (207.0bp,54.712bp)  .. (207.0bp,36.104bp);
                            						\draw [->] (117.81bp,76.807bp) .. controls (135.0bp,65.665bp) and (160.62bp,49.062bp)  .. (188.4bp,31.053bp);
                            						\draw [->] (41.57bp,146.83bp) .. controls (51.75bp,136.94bp) and (65.524bp,123.55bp)  .. (84.204bp,105.38bp);
                            						\draw [->] (153.81bp,364.81bp) .. controls (171.0bp,353.67bp) and (196.62bp,337.06bp)  .. (224.4bp,319.05bp);
                            						\draw [->] (243.0bp,359.7bp) .. controls (243.0bp,351.98bp) and (243.0bp,342.71bp)  .. (243.0bp,324.1bp);
                            						\draw [->] (192.46bp,577.46bp) .. controls (185.73bp,568.4bp) and (177.1bp,556.79bp)  .. (163.51bp,538.49bp);
                            						\draw [->] (314.56bp,505.12bp) .. controls (308.8bp,496.34bp) and (301.52bp,485.26bp)  .. (289.4bp,466.82bp);
                            						\draw [->] (179.35bp,144.76bp) .. controls (183.71bp,136.28bp) and (189.15bp,125.71bp)  .. (198.7bp,107.15bp);
                            						\draw [->] (77.57bp,434.83bp) .. controls (87.75bp,424.94bp) and (101.52bp,411.55bp)  .. (120.2bp,393.38bp);
                            						\draw [->] (163.93bp,505.81bp) .. controls (171.21bp,496.55bp) and (180.66bp,484.52bp)  .. (195.09bp,466.16bp);
                            						\draw [->] (361.35bp,720.76bp) .. controls (365.71bp,712.28bp) and (371.15bp,701.71bp)  .. (380.7bp,683.15bp);
                            						\draw [->] (214.61bp,648.05bp) .. controls (213.07bp,640.35bp) and (211.21bp,631.03bp)  .. (207.46bp,612.28bp);
                            						\draw [->] (185.57bp,290.83bp) .. controls (195.75bp,280.94bp) and (209.52bp,267.55bp)  .. (228.2bp,249.38bp);
                            						\draw [->] (318.98bp,647.7bp) .. controls (319.86bp,639.98bp) and (320.92bp,630.71bp)  .. (323.05bp,612.1bp);
                            						\draw [->] (135.0bp,431.7bp) .. controls (135.0bp,423.98bp) and (135.0bp,414.71bp)  .. (135.0bp,396.1bp);
                            						\draw [->] (219.88bp,504.05bp) .. controls (217.99bp,496.26bp) and (215.7bp,486.82bp)  .. (211.2bp,468.28bp);
                            						\draw [->] (243.0bp,287.7bp) .. controls (243.0bp,279.98bp) and (243.0bp,270.71bp)  .. (243.0bp,252.1bp);
                            						\draw [->] (270.65bp,432.76bp) .. controls (266.29bp,424.28bp) and (260.85bp,413.71bp)  .. (251.3bp,395.15bp);
                            						\draw [->] (158.59bp,649.81bp) .. controls (166.26bp,640.55bp) and (176.23bp,628.52bp)  .. (191.44bp,610.16bp);
                            						\draw [->] (325.0bp,575.7bp) .. controls (325.0bp,567.98bp) and (325.0bp,558.71bp)  .. (325.0bp,540.1bp);
                            						\draw [->] (99.0bp,143.7bp) .. controls (99.0bp,135.98bp) and (99.0bp,126.71bp)  .. (99.0bp,108.1bp);
                            						\draw [->] (227.69bp,29.757bp) .. controls (263.67bp,50.127bp) and (334.0bp,97.994bp)  .. (334.0bp,161.0bp) .. controls (334.0bp,379.0bp) and (334.0bp,379.0bp)  .. (334.0bp,379.0bp) .. controls (334.0bp,419.08bp) and (330.38bp,465.41bp)  .. (326.78bp,503.97bp);
                            						\draw [->] (185.57bp,218.83bp) .. controls (195.75bp,208.94bp) and (209.52bp,195.55bp)  .. (228.2bp,177.38bp);
                            						\draw [->] (325.35bp,792.76bp) .. controls (329.71bp,784.28bp) and (335.15bp,773.71bp)  .. (344.7bp,755.15bp);
                            						\draw [->] (136.84bp,576.05bp) .. controls (139.1bp,568.14bp) and (141.85bp,558.54bp)  .. (147.2bp,539.79bp);
                            						\draw [->] (243.0bp,215.7bp) .. controls (243.0bp,207.98bp) and (243.0bp,198.71bp)  .. (243.0bp,180.1bp);
                            						\draw [->] (380.65bp,792.76bp) .. controls (376.29bp,784.28bp) and (370.85bp,773.71bp)  .. (361.3bp,755.15bp);
                            						\draw [->] (375.43bp,650.15bp) .. controls (366.69bp,640.6bp) and (355.17bp,627.99bp)  .. (338.55bp,609.82bp);
                            						\draw [->] (234.65bp,144.76bp) .. controls (230.29bp,136.28bp) and (224.85bp,125.71bp)  .. (215.3bp,107.15bp);
                            						\draw [->] (215.35bp,432.76bp) .. controls (219.71bp,424.28bp) and (225.15bp,413.71bp)  .. (234.7bp,395.15bp);
                            					\begin{scope}
                            						\definecolor{strokecol}{rgb}{0.0,0.0,0.0};
                            						\pgfsetstrokecolor{strokecol}
                            						\draw (135.0bp,450.0bp) ellipse (27.0bp and 18.0bp);
                            						\draw (135.0bp,450.0bp) node {24};
                            					\end{scope}
                            					\begin{scope}
                            						\definecolor{strokecol}{rgb}{0.0,0.0,0.0};
                            						\pgfsetstrokecolor{strokecol}
                            						\draw (171.0bp,162.0bp) ellipse (27.0bp and 18.0bp);
                            						\draw (171.0bp,162.0bp) node {25};
                            					\end{scope}
                            					\begin{scope}
                            						\definecolor{strokecol}{rgb}{0.0,0.0,0.0};
                            						\pgfsetstrokecolor{strokecol}
                            						\draw (243.0bp,234.0bp) ellipse (27.0bp and 18.0bp);
                            						\draw (243.0bp,234.0bp) node {26};
                            					\end{scope}
                            					\begin{scope}
                            						\definecolor{strokecol}{rgb}{0.0,0.0,0.0};
                            						\pgfsetstrokecolor{strokecol}
                            						\draw (218.0bp,666.0bp) ellipse (27.0bp and 18.0bp);
                            						\draw (218.0bp,666.0bp) node {27};
                            					\end{scope}
                            					\begin{scope}
                            						\definecolor{strokecol}{rgb}{0.0,0.0,0.0};
                            						\pgfsetstrokecolor{strokecol}
                            						\draw (389.0bp,666.0bp) ellipse (27.0bp and 18.0bp);
                            						\draw (389.0bp,666.0bp) node {20};
                            					\end{scope}
                            					\begin{scope}
                            						\definecolor{strokecol}{rgb}{0.0,0.0,0.0};
                            						\pgfsetstrokecolor{strokecol}
                            						\draw (152.0bp,522.0bp) ellipse (27.0bp and 18.0bp);
                            						\draw (152.0bp,522.0bp) node {21};
                            					\end{scope}
                            					\begin{scope}
                            						\definecolor{strokecol}{rgb}{0.0,0.0,0.0};
                            						\pgfsetstrokecolor{strokecol}
                            						\draw (207.0bp,18.0bp) ellipse (27.0bp and 18.0bp);
                            						\draw (207.0bp,18.0bp) node {22};
                            					\end{scope}
                            					\begin{scope}
                            						\definecolor{strokecol}{rgb}{0.0,0.0,0.0};
                            						\pgfsetstrokecolor{strokecol}
                            						\draw (99.0bp,162.0bp) ellipse (27.0bp and 18.0bp);
                            						\draw (99.0bp,162.0bp) node {23};
                            					\end{scope}
                            					\begin{scope}
                            						\definecolor{strokecol}{rgb}{0.0,0.0,0.0};
                            						\pgfsetstrokecolor{strokecol}
                            						\draw (204.0bp,594.0bp) ellipse (27.0bp and 18.0bp);
                            						\draw (204.0bp,594.0bp) node {28};
                            					\end{scope}
                            					\begin{scope}
                            						\definecolor{strokecol}{rgb}{0.0,0.0,0.0};
                            						\pgfsetstrokecolor{strokecol}
                            						\draw (389.0bp,810.0bp) ellipse (27.0bp and 18.0bp);
                            						\draw (389.0bp,810.0bp) node {29};
                            					\end{scope}
                            					\begin{scope}
                            						\definecolor{strokecol}{rgb}{0.0,0.0,0.0};
                            						\pgfsetstrokecolor{strokecol}
                            						\draw (243.0bp,378.0bp) ellipse (27.0bp and 18.0bp);
                            						\draw (243.0bp,378.0bp) node {1};
                            					\end{scope}
                            					\begin{scope}
                            						\definecolor{strokecol}{rgb}{0.0,0.0,0.0};
                            						\pgfsetstrokecolor{strokecol}
                            						\draw (325.0bp,522.0bp) ellipse (27.0bp and 18.0bp);
                            						\draw (325.0bp,522.0bp) node {0};
                            					\end{scope}
                            					\begin{scope}
                            						\definecolor{strokecol}{rgb}{0.0,0.0,0.0};
                            						\pgfsetstrokecolor{strokecol}
                            						\draw (132.0bp,594.0bp) ellipse (27.0bp and 18.0bp);
                            						\draw (132.0bp,594.0bp) node {3};
                            					\end{scope}
                            					\begin{scope}
                            						\definecolor{strokecol}{rgb}{0.0,0.0,0.0};
                            						\pgfsetstrokecolor{strokecol}
                            						\draw (317.0bp,810.0bp) ellipse (27.0bp and 18.0bp);
                            						\draw (317.0bp,810.0bp) node {2};
                            					\end{scope}
                            					\begin{scope}
                            						\definecolor{strokecol}{rgb}{0.0,0.0,0.0};
                            						\pgfsetstrokecolor{strokecol}
                            						\draw (171.0bp,234.0bp) ellipse (27.0bp and 18.0bp);
                            						\draw (171.0bp,234.0bp) node {5};
                            					\end{scope}
                            					\begin{scope}
                            						\definecolor{strokecol}{rgb}{0.0,0.0,0.0};
                            						\pgfsetstrokecolor{strokecol}
                            						\draw (146.0bp,666.0bp) ellipse (27.0bp and 18.0bp);
                            						\draw (146.0bp,666.0bp) node {4};
                            					\end{scope}
                            					\begin{scope}
                            						\definecolor{strokecol}{rgb}{0.0,0.0,0.0};
                            						\pgfsetstrokecolor{strokecol}
                            						\draw (63.0bp,450.0bp) ellipse (27.0bp and 18.0bp);
                            						\draw (63.0bp,450.0bp) node {7};
                            					\end{scope}
                            					\begin{scope}
                            						\definecolor{strokecol}{rgb}{0.0,0.0,0.0};
                            						\pgfsetstrokecolor{strokecol}
                            						\draw (243.0bp,162.0bp) ellipse (27.0bp and 18.0bp);
                            						\draw (243.0bp,162.0bp) node {6};
                            					\end{scope}
                            					\begin{scope}
                            						\definecolor{strokecol}{rgb}{0.0,0.0,0.0};
                            						\pgfsetstrokecolor{strokecol}
                            						\draw (325.0bp,594.0bp) ellipse (27.0bp and 18.0bp);
                            						\draw (325.0bp,594.0bp) node {9};
                            					\end{scope}
                            					\begin{scope}
                            						\definecolor{strokecol}{rgb}{0.0,0.0,0.0};
                            						\pgfsetstrokecolor{strokecol}
                            						\draw (27.0bp,162.0bp) ellipse (27.0bp and 18.0bp);
                            						\draw (27.0bp,162.0bp) node {8};
                            					\end{scope}
                            					\begin{scope}
                            						\definecolor{strokecol}{rgb}{0.0,0.0,0.0};
                            						\pgfsetstrokecolor{strokecol}
                            						\draw (317.0bp,666.0bp) ellipse (27.0bp and 18.0bp);
                            						\draw (317.0bp,666.0bp) node {11};
                            					\end{scope}
                            					\begin{scope}
                            						\definecolor{strokecol}{rgb}{0.0,0.0,0.0};
                            						\pgfsetstrokecolor{strokecol}
                            						\draw (224.0bp,522.0bp) ellipse (27.0bp and 18.0bp);
                            						\draw (224.0bp,522.0bp) node {10};
                            					\end{scope}
                            					\begin{scope}
                            						\definecolor{strokecol}{rgb}{0.0,0.0,0.0};
                            						\pgfsetstrokecolor{strokecol}
                            						\draw (243.0bp,306.0bp) ellipse (27.0bp and 18.0bp);
                            						\draw (243.0bp,306.0bp) node {13};
                            					\end{scope}
                            					\begin{scope}
                            						\definecolor{strokecol}{rgb}{0.0,0.0,0.0};
                            						\pgfsetstrokecolor{strokecol}
                            						\draw (279.0bp,450.0bp) ellipse (27.0bp and 18.0bp);
                            						\draw (279.0bp,450.0bp) node {12};
                            					\end{scope}
                            					\begin{scope}
                            						\definecolor{strokecol}{rgb}{0.0,0.0,0.0};
                            						\pgfsetstrokecolor{strokecol}
                            						\draw (425.0bp,738.0bp) ellipse (27.0bp and 18.0bp);
                            						\draw (425.0bp,738.0bp) node {15};
                            					\end{scope}
                            					\begin{scope}
                            						\definecolor{strokecol}{rgb}{0.0,0.0,0.0};
                            						\pgfsetstrokecolor{strokecol}
                            						\draw (99.0bp,90.0bp) ellipse (27.0bp and 18.0bp);
                            						\draw (99.0bp,90.0bp) node {14};
                            					\end{scope}
                            					\begin{scope}
                            						\definecolor{strokecol}{rgb}{0.0,0.0,0.0};
                            						\pgfsetstrokecolor{strokecol}
                            						\draw (207.0bp,90.0bp) ellipse (27.0bp and 18.0bp);
                            						\draw (207.0bp,90.0bp) node {17};
                            					\end{scope}
                            					\begin{scope}
                            						\definecolor{strokecol}{rgb}{0.0,0.0,0.0};
                            						\pgfsetstrokecolor{strokecol}
                            						\draw (353.0bp,738.0bp) ellipse (27.0bp and 18.0bp);
                            						\draw (353.0bp,738.0bp) node {16};
                            					\end{scope}
                            					\begin{scope}
                            						\definecolor{strokecol}{rgb}{0.0,0.0,0.0};
                            						\pgfsetstrokecolor{strokecol}
                            						\draw (207.0bp,450.0bp) ellipse (27.0bp and 18.0bp);
                            						\draw (207.0bp,450.0bp) node {19};
                            					\end{scope}
                            					\begin{scope}
                            						\definecolor{strokecol}{rgb}{0.0,0.0,0.0};
                            						\pgfsetstrokecolor{strokecol}
                            						\draw (171.0bp,306.0bp) ellipse (27.0bp and 18.0bp);
                            						\draw (171.0bp,306.0bp) node {18};
                            					\end{scope}
                            					\begin{scope}
                            						\definecolor{strokecol}{rgb}{0.0,0.0,0.0};
                            						\pgfsetstrokecolor{strokecol}
                            						\draw (135.0bp,378.0bp) ellipse (27.0bp and 18.0bp);
                            						\draw (135.0bp,378.0bp) node {30};
                            					\end{scope}
                            					\end{tikzpicture}
                            				}
                            				\caption{The functional graph of $X^2+12 \pmod{31}$}
				                         \label{pic:connected_graph}
                            				\end{figure}

Recently, there have been an increasing interest in studying, theoretically and experimentally,
the graphs $\cG_f$ generated
by polynomials $f \in \F_q[X]$ of small degree (such as quadratic polynomials), and how they differ, or not, from random mappings~\cite{FO2}.
We refer to~\cite{BGTW,BrGa,BuSch,FlGar,KLMMSS, OstSha} and the references therein.

In this paper, we concentrate on the case of quadratic polynomials over prime fields.
In fact, up to isomorphism we only need to consider polynomials $f_a(X) = X^2 +a$, $a \in \F_p$ (see the proof of~\cite[Theorem~2.1]{KLMMSS}).
For simplicity, we use $\cG_a = \cG_{f_a}$ to denote the functional graph generated by $f_a$.
For this case, in~\cite[Section 4]{KLMMSS} the authors have provided numerical data for the number of distinct graphs $\cG_a$,
the statistics of cyclic points, the number of connected components, as well as the most popular component size.

Different from the aspects in~\cite{KLMMSS}, we consider several questions related to
distributions of cyclic points and sizes of connected components of $\cG_a$ when $a$ runs through the elements in $\F_p$.
In particular, we are interested in characterising connected functional  graphs $\cG_a$, that is, the graphs which
contain only one component (and thus only one cycle).

In this paper, we focus on characterising the functional graphs by providing direct parameters such as the number of (connected) components.
We then characterise various cumulative parameters, such as the number of cyclic points and the shape of trees extracted from functional graphs.
We highlight similarities and differences  between functional graphs~\cite{KLMMSS} and random mappings~\cite{FO2},
and we also pay much attention to features of connected functional graphs.
While obtaining theoretic results for these questions remains a challenge, we introduce efficient algorithms and
present new interesting results of numerical experiments.

The rest of the paper is structured as follows.
In Section~\ref{sect:counting}, we develop a fast algorithm that determines whether a functional graph is connected,
which is used to compute the number of connected functional graphs.
In Section~\ref{sect:cyclic},
we compare the number of cyclic points in connected graphs with those in all graphs modulo $p$.
In Section~\ref{sect:smallcyclic} and Section~\ref{sect:smallsize} respectively, we consider the number
of components with small number  of cyclic points and with small size.
Finally, in Section~\ref{sect:tree} we illustrate the statistics of trees in functional graphs.

Throughout the paper, we use the Landau symbol $O$. Recall that the assertion $U=O(V)$ is equivalent to the inequality $|U|\le cV$ with some absolute constant $c>0$.
  To emphasise the dependence of the implied
constant $c$ on some parameter (or a list of parameters) $\rho$, we write $U=O_{\rho}(V)$.
We also use the asymptotic symbol $\sim$.

\section{Counting connected graphs}
\label{sect:counting}

In this section, we introduce a new efficient algorithm that quickly  detects  connected functional graphs, and formulate some conjectures for the number of connected graphs based on our computations.

\subsection{Preliminaries and informal ideas of the algorithm}

Let $\cI_p$ be the set $a \in \F_p$ such that $\cG_a$ is connected.
We also denote by $I_p = \# \cI_p$ the
number of connected graphs $\cG_a$ with $a \in \F_p$.
Clearly the graph $\cG_0$ is not connected, and also
by~\cite[Corollary~18~(a)]{VaSha} $\cG_{-2}$ is also not connected if $p>3$,
and so $\cI_p \subseteq \F_p \setminus \{0,-2\}$ if $p>3$.
In fact, the functional graphs with values $a = 0$ and $a = -2$ lead to graphs with a particular
group structure (and thus the structure of these graphs deviates significantly
from the other graphs, see~\cite{VaSha}).

Essentially in~\cite[Algorithm 3.1]{KLMMSS}, a rigorous deterministic algorithm using Floyd's cycle detection algorithm and needing $O(p)$ function evaluations (that is, of complexity $p^{1+o(1)}$)
has been used to test whether $\cG_a$ is a connected graph.
Instead of evaluating $I_p$ via this algorithm which would need $O(p^2)$ function evaluations,
we introduce a more efficient heuristic approach in practice, which is specifically useful for computations of a family of graphs (not just a single graph).

The main idea is to first check quickly whether $\cG_a$ has more than one small cycle (i.e., more than one component).
A graph $\cG_a$ has a component  with a \textit{cycle} of size   $\ell$ if and only if the
equation $f_a^{(\ell)}(u) = u$ has a solution $u$ which is not a solution to any of the equations
$f_a^{(k)}(u) = u$ with $1 \le k < \ell$.
The roots of $f_a^{(\ell)}(u) = u$ are the \textit{cyclic points} in the graph.
For this we need the \textit{dynatomic polynomials}
$$
F_a^{(\ell)}(X)  = \prod_{r \mid\ell} \(f_a^{(r)}(X) - X\)^{\mu(\ell/r)},
$$
where $\mu(k)$ is the M{\"o}bius function, see~\cite[Section~4.1]{Silv}.
Moreover, we have
$$
f_a^{(n)}(X) - X = \prod_{\ell \mid n} F_a^{(\ell)}(X), \quad n=1,2,\ldots.
$$

For example
$$
F_a^{(1)}(X)  = X^2 -X +a \mand F_a^{(2)}(X)  = X^2 + X + a +1
$$
and
$$
 F_a^{(3)}(X)  = \(f_a^{(3)}(X) - X\)/ \(f_a^{(1)}(X) - X\).
$$

Clearly, if  $\cG_a$ has a cycle of length $\ell$, then any point in this cycle is a root of the polynomial $F_a^{(\ell)}(X)$.
However, the roots of $F_a^{(\ell)}(X)$ might be not all lying in cycles of length $\ell$; for instance see~\cite[Example~4.2]{Silv}.
Certainly, $\cG_a$ is not connected if $F_a^{(\ell)}(X)$ has a root for two distinct values
of $\ell =\ell_1,\ell_2$ with $\ell_1\nmid \ell_2$ and $\ell_2 \nmid \ell_1$. Alternatively,  if  $F_a^{(\ell)}(X)$ has more than $\ell$ distinct roots, this indicates that
$\cG_a$ has at least two cycles, which again implies that $\cG_a$ has more than one
connected component.

As we  show later, it turns out that this occurs frequently and thus we can rule out the connectivity of  most of the
graph $\cG_a$, $a \in \F_p$ quickly. A relatively small number of remaining suspects can be checked via
the rigorous deterministic algorithm from~\cite[Algorithm 3.1]{KLMMSS}.

\subsection{Algorithm}

Algorithm~\ref{algo:one_component} is to determine whether a graph is connected or not,
where we in fact use $f_a^{(\ell)}(X)$ instead of $F_a^{(\ell)}(X)$.

\begin{algorithm}
\begin{algorithmic}[1]
	\REQUIRE prime $p$, integer $a \pmod p$ and integer $L$.
	\ENSURE returns true if $X^2+a \pmod p$ generates a connected functional graph, and false otherwise
	\STATE $cycles \leftarrow 0$
	\STATE $g_1 \leftarrow \gcd(X^p-X,f_a^{(1)}(X)-X)$
	\IF {$\deg g_1 \ge 1$}
		\IF{$\deg g_1 = 2$}
			\RETURN false
		\ENDIF
	    \STATE $cycles \leftarrow cycles + 1$
	\ENDIF
	\FOR{$i \leftarrow 2$ to $L$}
		\STATE $g_i \leftarrow \gcd(X^p-X,f_a^{(i)}(X)-X)$
		\IF{$\deg g_i > i$}
			\RETURN false
		\ELSIF {$\deg g_i = i$}
			\STATE $cycles \leftarrow cycles + 1$
		\ENDIF
		\IF{$cycles > 1$}
			\RETURN false
		\ENDIF
	\ENDFOR
  \FOR{$j \leftarrow 0$ to $p-1$}
    \STATE start traversal from node $j$
    \IF{two cycles are detected}
      \RETURN false
    \ENDIF
  \ENDFOR
	\RETURN true
\end{algorithmic}
\caption{Determine if $\cG_a$ is a connected graph}
\label{algo:one_component}
\end{algorithm}

The algorithm starts by checking if there is any cycle of size 1 in the graph.
Since $X^p-X$ only contains simple roots and $f_a^{(1)}(X)$ has degree 2,
if $\gcd(X^p-X,f_a^{(1)}(X)) > 1$, then there are two cycles of size 1 and
thus two separate components in the graph.
Otherwise, there is at most one component with a cycle of size 1 in the graph $\cG_a$.

Next, we compute $g_i = \gcd(X^p-X,f_a^{(i)}(X)-X)$ from $i=2$ until $L$ while keeping track of
the number of cycles that has been detected.
Here, we have several possibilities:
\begin{itemize}
\item if $\deg g_i < i$, then there are no cycle of size $i$ in the graph.
\item if $\deg g_i = i$, then there is exactly one cycle of size $i$.
\item if $\deg g_i > i$, then there are at least two different cycles in the
graph.
\end{itemize}

When $\deg g_i < i$, there are no cycle of size $i$ since there are
not enough roots to form one.
Similarly, if $\deg g_i > i$, then there are more than $i$ cyclic points
in the graph, of which at least $i$ of them form one cycle, and
so there are more than one cycle in the graph.

Finally, if at this stage the algorithm detects $\deg g_i = i$, then there is exactly one cycle of size $i$.
By contradiction, if there is no cycle of size $i$, then there must be at least two cycles of size less than $i$,
and so  we would have detected that $cycles > 1$ at a previous iteration,
thus returning `false'.

Once we are done with the first loop, either we have found one cycle with size at most  $L$,
or we have not found any small cycles at all.  We then proceed with a graph traversal
until we find two cycles.

\subsection{Statistics of the number of connected graphs}

We implement  Algorithm~\ref{algo:one_component} by using NTL~\cite{NTL} and PARI/GP~\cite{Pari}, choosing $L=5$ in our computations.
 We collect values of $I_p$ for some primes (as shown in Table~\ref{table:one_component_count})
that lead us to the following conjecture:

\begin{conj}     \label{conj:Ip}
$I_p \sim \sqrt{2p}$ as $p\to \infty$.
\end{conj}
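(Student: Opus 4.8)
The statement is a conjecture extrapolated from Table~\ref{table:one_component_count}, so a proof is certainly out of reach of present techniques; even $I_p\to\infty$ or $I_p=o(p)$ would be a genuine advance. The plan would be in three stages: (a) recover the order $\sqrt p$ from the random mapping heuristic, (b) reduce the constant to a precise equidistribution statement about cyclic points, and (c) make the ``short cycle'' part of that statement unconditional. For (a), write $I_p=\#\{a\in\F_p\setminus\{0,-2\}:\cG_a\text{ has exactly one cycle}\}$, condition on the set of cyclic points of $\cG_a$, and model the self-map induced on that set by a uniformly random permutation, in the spirit of~\cite{FO2}; then $\Pr[\cG_a\text{ connected}\mid m(a)=m]=1/m$ with $m(a)$ the number of cyclic points, so that
\[
I_p=\sum_{a}\mathbf{1}[\cG_a\text{ connected}]\;\approx\;p\,\mathbb{E}_a\!\left[\frac{1}{m(a)}\right]\;\sim\;\sqrt p\;\mathbb{E}\!\left[\frac{1}{\mathcal{R}}\right],
\]
where $\mathcal{R}$ is the limit law of $m(a)/\sqrt p$. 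For the random mapping law this gives $\mathbb{E}[1/\mathcal{R}]=\sqrt{\pi/2}$, hence $I_p\sim\sqrt{\pi p/2}$ --- \emph{not} $\sqrt{2p}$. So Conjecture~\ref{conj:Ip} already encodes a real departure from the random model, and any proof must deliver the constant $\sqrt2$ exactly. If one grants in addition that $m(a)/\sqrt p$ has a Rayleigh limit, the conjecture forces $\frac1p\sum_{a}m(a)\sim\frac{\pi}{2\sqrt2}\sqrt p$, strictly below the random mapping value $\sqrt{\pi p/2}$; the data of Section~\ref{sect:cyclic} can be compared against this, and if instead the Rayleigh shape fails, then $\cG_a$ must be biased toward having one long cycle rather than several short ones.

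For (c), the part that can be made rigorous is the contribution of short cycles. For bounded $\ell$ the dynatomic polynomials $F_a^{(\ell)}(X)$ have bounded degree, so the affine dynatomic curves $\{(X,a):F_a^{(\ell)}(X)=0\}$ and the relevant resultants are of bounded degree, and the Weil bound yields, for each fixed $L$,
\begin{gather*}
\#\{a:\cG_a\text{ has a unique cycle, of length }\le L\}=(\alpha_L+o(1))p,\\
\#\{a:\cG_a\text{ has}\ge2\text{ cycles of length}\le L\}=(\beta_L+o(1))p,
\end{gather*}
with power-saving error $O_L(p^{1/2+o(1)})$ and explicit $\alpha_L,\beta_L$ agreeing with the Poisson prediction built from $\prod_{\ell\le L}e^{-1/\ell}$. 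This shows unconditionally that a positive proportion of the $\cG_a$ are disconnected, and it reduces Conjecture~\ref{conj:Ip} to understanding the ``undetected'' $a$: those whose unique cycle has length $>L$, together with those having no cycle of length $\le L$ at all.

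The decisive obstacle is the long-cycle regime. A connected $\cG_a$ can have its single cycle of length up to order $\sqrt p$, and there is no rigorous handle on the periodic-point structure of $X^2+a$ uniformly in $a$ once the period exceeds a fixed multiple of $\log p$ --- already because $\deg F_a^{(\ell)}\approx 2^{\ell}/\ell$, so the Weil bound is vacuous for $\ell\gg\log p$. In fact one does not even have an unconditional estimate of the correct order for $\sum_{a\in\F_p}m(a)$, which is precisely the input needed for the heuristic above. Worse, since the short-cycle averages match the random model exactly, the gap between $\sqrt2$ and $\sqrt{\pi/2}$ cannot be explained by any finite collection of moments: it must come either from slow convergence in the accessible range of $p$ (in which case $\sqrt2$ is an artefact and the true constant is $\sqrt{\pi/2}$) or from genuine correlations among the factorizations of the $F_a^{(\ell)}$ as $\ell\to\infty$, i.e.\ from a monodromy/Galois statement for the dynatomic tower of $X^2+a$ that is itself open. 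The realistic plan is therefore: execute (c) to obtain an unconditional positive lower density of disconnected $\cG_a$; formulate the exact equidistribution hypothesis for cyclic points that, combined with (c), gives $I_p\sim c\sqrt p$; and, until that hypothesis is settled, treat $c=\sqrt2$ as a numerically fitted constant whose explanation is the real problem.
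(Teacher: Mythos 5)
The statement you were asked to prove is Conjecture~\ref{conj:Ip}: the paper offers no proof of it and does not claim one. Its only support in the paper is the numerical evidence of Table~\ref{table:one_component_count}, so there is no argument of the authors' to compare yours against. You correctly recognise this and supply a heuristic research programme rather than a proof; that is the honest response, and the individual ingredients (the $1/m$ connectivity probability conditional on $m$ cyclic points, $\mathbb{E}[1/\mathcal{R}]=\sqrt{\pi/2}$ for the standard Rayleigh law, the Weil/dynatomic control of bounded-length cycles with the $\deg F_a^{(\ell)}\approx 2^{\ell}/\ell$ obstruction for $\ell\gg\log p$) are all sound and consistent with what the paper itself can and cannot do (compare Proposition~\ref{prop:Ck}, which is exactly your step (c) for the cycle counts).

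One substantive caveat about your step (a): the null model for $x^2+a$ should not be a uniformly random mapping but a random \emph{binary} functional graph (every point with in-degree $0$ or $2$), as in the second half of~\cite{FO2}. Several statistics change constants between the two models, and in particular the conditional cycle structure on the cyclic points of a random $2$-to-$1$ map need not be that of a uniform permutation; so your conclusion that $\sqrt{2}$ versus $\sqrt{\pi/2}$ ``already encodes a real departure from the random model'' is premature --- the discrepancy may be entirely an artefact of using the wrong reference model, and the binary model might well predict $\sqrt{2p}$ outright. Relatedly, your deduction that the conjecture ``forces'' $\frac1p\sum_a m(a)\sim\frac{\pi}{2\sqrt2}\sqrt p$ rests on both the Rayleigh shape and the $1/m$ rule holding simultaneously; the paper's own data (Table~\ref{table:cyclic_points}, where $\overline{C_p}\approx\sqrt{\pi p/2}$) contradicts that combination, which tells you one of the two modelling assumptions, not the conjecture, is at fault. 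None of this is a gap in a proof, since no proof exists or was expected; but if your discussion is to be kept, the comparison should be made against the binary functional graph statistics rather than the general random mapping ones.
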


Here, we also pose a weaker conjecture:

\begin{conj} \label{conj:Ip 1}
For any prime $p$, $I_p \ge 1$.
\end{conj}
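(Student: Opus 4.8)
The plan is to recast Conjecture~\ref{conj:Ip 1} dynamically and then attack it by a counting (sieve) argument, with small primes handled by direct computation. Since $\F_p$ is finite, iterating $f_a(X)=X^2+a$ from any node produces an eventually periodic orbit, so $\cG_a$ always has at least one cycle; and $\cG_a$ is connected precisely when it has \emph{exactly} one cycle (the unique cycle absorbs every node, so the graph is one component). Writing $N(a)$ for the number of periodic orbits of $f_a$ on $\F_p$, Conjecture~\ref{conj:Ip 1} is equivalent to $\min_{a\in\F_p}N(a)=1$ for every prime $p$. Since every periodic point of $f_a$ on $\F_p$ has period at most $p$, the cycle structure — hence $N(a)$ — is determined by the counts $\nu_\ell(a):=\#\{u\in\F_p: f_a^{(\ell)}(u)=u\}$ for $1\le\ell\le p$, which is precisely $\deg\gcd(X^p-X,\,f_a^{(\ell)}(X)-X)$, the quantity computed in Algorithm~\ref{algo:one_component} (Möbius inversion of the $\nu_\ell(a)$ recovers the number of points of each exact period). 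So the task is to understand the joint behaviour of the $\nu_\ell(a)$ as $a$ ranges over $\F_p$ and to produce some $a$ for which these are consistent with a single cycle.

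The next step is a first-moment heuristic, to gauge what a counting argument can plausibly give. For fixed $x$ and $\ell$, the map $a\mapsto f_a^{(\ell)}(x)-x$ is a polynomial in $a$ of degree $2^{\ell-1}$, whence
$$
\sum_{a\in\F_p}\nu_\ell(a)\;=\;\sum_{x\in\F_p}\#\{a\in\F_p:\ f_a^{(\ell)}(x)=x\}\;\le\;p\cdot 2^{\ell-1};
$$
heuristically — a polynomial over $\F_p$ has on average about one root — the true size should be $O(p)$ roughly uniformly in $\ell$, so the average number of cycles is of order $\log p$, just as for random mappings~\cite{FO2}, which have $\tfrac12\log n$ cycles on average and are connected with probability $\sim\sqrt{\pi/(2n)}$. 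Over the $p$ choices of $a$ this predicts $\sim\sqrt{\pi p/2}$ connected graphs, consistent with Conjecture~\ref{conj:Ip} up to the value of the constant. The obstruction is already visible: $\sum_a N(a)$ is of size $\sim\tfrac12 p\log p\gg 2p$, whereas a pure averaging argument — which would need $\sum_a(N(a)-1)<p$ — cannot force $N(a)=1$ for any single $a$.

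To go further one must analyse the distribution of $N(\cdot)$ more finely, say via a second-moment or large-deviation estimate showing $\#\{a:N(a)=1\}\gg\sqrt p$ (in particular $\ge1$). Rigorously this requires Weil-type bounds, uniform in both $\ell$ and $p$, for the number of $\F_p$-points of the curves $V_\ell=\{(a,x):f_a^{(\ell)}(x)=x\}$ and of their fibre products over the $a$-line (which govern the higher moments of $N$), and these demand geometric information on the dynatomic polynomials $F_a^{(\ell)}$ (irreducibility, singular locus) and on the Galois action on the iterated preimages of $X^2+a$ — the circle of questions around arboreal representations. This is the main obstacle, and it looks serious: the genus of $V_\ell$ grows like $2^\ell$, so point counts are informative only for $\ell=O(\log p)$, yet connectedness is sensitive to cycles of every length up to about $(p+1)/2$, and one cannot certify the absence of a long cycle merely by excluding short ones. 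For completeness the conjecture should also be verified for all primes below an explicit bound using Algorithm~\ref{algo:one_component} or the rigorous test of~\cite[Algorithm~3.1]{KLMMSS}; the genuine difficulty is a uniform argument valid for \emph{all} $p$.

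I would also pursue, in parallel, an explicit construction of some $a=a(p)$ with $\cG_{a(p)}$ connected — for instance by forcing the forward orbit of the unique critical point $0$ into a single absorbing cycle, or by exploiting the rigid local structure of $\cG_a$ (there are exactly $(p-1)/2$ leaves, and every other non-leaf node has in-degree~$2$). Small cases show how delicate this is: if a connected $\cG_a$ has a cycle of length~$1$ then $f_a$ has a unique fixed point, forcing $1-4a=0$, i.e.\ $a=1/4$; yet $\cG_{1/4}$ need not be connected (for $p=5$ this is $a=4$, with separate cycles $\{0,4\}$ and $\{3\}$; for $p=11$ it is $a=3$, with cycles $\{6\}$ and $\{1,4,8\}$). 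Since the one-parameter family $X^2+a$ leaves little room for interpolation, and the numerics suggest $\cI_p$ has no simple closed description, I expect that producing such a construction — if one exists at all — would be the hardest part.
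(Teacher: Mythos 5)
The statement you were asked to prove is labelled as a conjecture in the paper, and the paper offers no proof of it: the only support given is a direct computational verification that $I_p \ge 1$ for all primes $p \le 100\,000$ (using Algorithm~\ref{algo:one_component}). So there is no proof in the paper against which to match your argument, and, to your credit, your write-up is honest that it is a research programme rather than a proof. Your preliminary reductions are correct and match the paper's framework: connectedness of $\cG_a$ is equivalent to $f_a$ having exactly one periodic orbit, the quantities $\nu_\ell(a)=\deg\gcd(X^p-X, f_a^{(\ell)}(X)-X)$ are exactly what the algorithm computes, $f_a^{(\ell)}(x)-x$ has degree $2^{\ell-1}$ in $a$, and your small examples ($p=5$, $a=4$; $p=11$, $a=3$) and the observation that a connected graph with a $1$-cycle forces $a=1/4$ all check out and echo the paper's own Proposition on $\cG_{1/4}$.

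The genuine gap is that nothing in the proposal actually produces, for an arbitrary prime $p$, a single value of $a$ with $N(a)=1$. You correctly diagnose why each route fails: the first-moment bound $\sum_a N(a)\sim \tfrac12 p\log p$ is far too large to force any $N(a)=1$; a second-moment or large-deviation argument would need point counts on the curves $f_a^{(\ell)}(x)=x$ and their fibre products uniformly in $\ell$, but the Hasse--Weil error term $O(4^\ell p^{1/2})$ (the same bound the paper uses in Proposition~\ref{prop:Ck}) is only nontrivial for $\ell=O(\log p)$, whereas ruling out a second cycle requires control of all lengths up to about $(p+1)/2$; and no explicit construction of a connected $a(p)$ is known. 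Identifying obstacles is not overcoming them, so the conjecture remains exactly as open after your proposal as before it. If anything, the quantitative heuristics you give (average $\sim\tfrac12\log p$ cycles, connectedness probability $\sim\sqrt{\pi/(2p)}$ per $a$ under the random-mapping model) point toward the stronger Conjecture~\ref{conj:Ip} that $I_p$ grows like a constant times $\sqrt{p}$, but they cannot exclude the possibility that $I_p=0$ for some sporadic prime, which is precisely what Conjecture~\ref{conj:Ip 1} asserts never happens.
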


Conjecture~\ref{conj:Ip 1} predicts that there always exists a connected functional graph generated by quadratic polynomials modulo $p$.
Indeed, according to our computations, Conjecture~\ref{conj:Ip 1} is true for all primes $p \le 100 000$.

\begin{table}[H]
{\small
	\begin{tabular}{r r r}
	\hline \\[-2.2ex]
	\multicolumn{1}{c}{$p$} &
	\multicolumn{1}{c}{$I_p$} &
	\multicolumn{1}{c}{$\sqrt{2p}$}\\[0.2ex]
	\hline\\[-1.5ex]
	  500,009 & 1,038 & 1,000.009  \\
	  500,029 & 1,002 & 1,000.029  \\
	  500,041 &  956 & 1,000.041  \\
	  500,057 & 1,026 & 1,000.057  \\
	  500,069 &  995 & 1,000.069  \\
	  500,083 &  987 & 1,000.083  \\
	  500,107 &  994 & 1,000.107  \\
    500,111 & 1,010 & 1,000.111  \\
	  500,113 & 1,019 & 1,000.113  \\
	  500,119 &  920 & 1,000.119  \\
	  500,153 & 1,033 & 1,000.153  \\
	  500,167 & 1,005 & 1,000.167  \\
	1,000,003 & 1,369 & 1,414.296  \\
	2,000,003 & 1,909 & 2,000.001  \\
	3,000,017 & 2,478 & 2,449.497  \\
	4,000,037 & 2,838 & 2,828.440  \\
	\hline\\
	\end{tabular}
}
\caption{The number of connected graphs modulo $p$}
\label{table:one_component_count}
\end{table}

We also investigate the existence of connected functional graphs having (only) one cycle of size $1$.

If the graph $\cG_a$ is connected and has one cycle of size $1$,
then the equation $X^2+a=X$ has two identical roots (corresponding to fixed points), and so $a=1/4$ and the root $x=1/2$.
Thus, we only need to check the graph generated by $X^2+1/4$ in $\F_p$.

We have tested all the primes up to 100000 and we only have found two such examples: one is $X^2+1$ in $\F_3$, and
the other is $X^2+2$ in $\F_7$.
Furthermore, we have:

\begin{prop}
For any prime $p$ with $p\equiv \textrm{$5$ or $11$} \pmod{12}$, there is no functional graph $\cG_a$ having only one cycle of size $1$.
\end{prop}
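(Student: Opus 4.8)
The plan is to exploit the reduction already carried out immediately before the statement. Since every connected functional graph has a unique cycle, a graph $\cG_a$ ``having only one cycle of size $1$'' is precisely a connected $\cG_a$ whose unique cycle is a fixed point; two distinct fixed points would give two distinct cycles and hence two components, so this forces the fixed‑point equation $X^2-X+a=0$ to have a double root, i.e. $a=1/4$ with the fixed point $X=1/2$. Thus it suffices to prove that $\cG_{1/4}$ (the graph of $f_{1/4}(X)=X^2+1/4$) is \emph{not} connected whenever $p\equiv 5$ or $11\pmod{12}$.

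The key point is that both residue classes $5,11\pmod{12}$ satisfy $p\equiv 2\pmod 3$, and for such primes $-3$ is a quadratic non‑residue modulo $p$ — the standard consequence of quadratic reciprocity $\left(\frac{-3}{p}\right)=\left(\frac{p}{3}\right)$. I would then simply trace the iterated preimages of the fixed point $1/2$: since $\cG_{1/4}$ is a functional graph, the connected component of the node $1/2$ equals the set of all nodes from which $1/2$ is reached under iteration, i.e. the union of the iterated preimages of $1/2$. Solving $X^2+1/4=1/2$ gives $X^2=1/4$, so the only preimages of $1/2$ are $1/2$ itself and $-1/2$. Solving $X^2+1/4=-1/2$ gives $X^2=-3/4$, which has no solution in $\F_p$ precisely because $-3$ (hence $-3/4$) is a non‑residue; therefore $-1/2$ has no preimage under $f_{1/4}$.

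Putting this together, the connected component of $1/2$ in $\cG_{1/4}$ is exactly $\{1/2,-1/2\}$, a set of two nodes. Since $p\equiv 5,11\pmod{12}$ forces $p\ge 5>2$, the graph $\cG_{1/4}$ must have further components and so is not connected, which finishes the proof. I do not expect a genuine obstacle here: the only slightly delicate step is the logical reduction to the single polynomial $X^2+1/4$, and the rest is the routine reciprocity evaluation of $\left(\frac{-3}{p}\right)$ together with the elementary preimage count. One could instead try to exhibit a second cycle directly — for instance a $2$‑cycle exists iff $-1$ is a square, which disposes of $p\equiv5\pmod{12}$ but still requires a separate argument for $p\equiv11\pmod{12}$ — but the preimage‑tree argument treats both classes uniformly and is cleaner.
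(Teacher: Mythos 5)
Your proposal is correct and follows essentially the same route as the paper: reduce to the single graph $\cG_{1/4}$ with fixed point $1/2$, observe that the only preimages of $1/2$ are $\pm 1/2$, and show that $-1/2$ has no preimage because $X^2+1/4=-1/2$ requires $-3$ to be a square, which fails for $p\equiv 5,11\pmod{12}$. Your write-up just makes explicit the reciprocity computation of $\left(\frac{-3}{p}\right)$ and the final counting step ($p>2$ nodes, component of size $2$) that the paper leaves implicit.
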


\begin{proof}
Note that we only need to consider the graph $\cG_{1/4}$.
Since $1/2$ is a fixed point of $\cG_{1/4}$ and there is an edge from $-1/2$ to $1/2$,
 we consider the equation $X^2+1/4 = -1/2$ in $\F_p$, that is, whether $-3$ is a square in $\F_p$.
However, if $p\equiv \textrm{$5$ or $11$} \pmod{12}$, $-3$ is not a square in $\F_p$.
Then, the in-degree of $-1/2$ is zero, and so $\cG_{1/4}$ must have more than one cycle.
This completes the proof.
\end{proof}

So, we pose the following conjecture:

\begin{conj}
For any prime $p>7$, there is no functional graph $\cG_a$ having only one cycle of size $1$.
\end{conj}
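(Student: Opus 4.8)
The plan is to recast the statement and then eliminate primes residue class by residue class. The reduction already made above shows that a connected graph $\cG_a$ whose unique cycle has size $1$ can occur only for $a=1/4$, with fixed point $1/2$; hence the conjecture is equivalent to the claim that $\cG_{1/4}$ is \emph{not} connected for every prime $p>7$. Since each connected component of a functional graph contains exactly one cycle, and $X^2+1/4=X$ has $1/2$ as its only root, $\cG_{1/4}$ is disconnected precisely when $f_{1/4}$ has a periodic point in $\F_p$ other than $1/2$, i.e.\ when $f_{1/4}^{(n)}(X)-X$ has a root different from $1/2$ in $\F_p$ for some $n\ge 2$; in practice one searches for $\F_p$-roots of the dynatomic polynomials $F_{1/4}^{(n)}(X)$, $n\ge 2$. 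It is convenient to record that $X\mapsto X+1/2$ conjugates $f_{1/4}$ to $g(X)=X^2+X$, moving the fixed point to $0$, which shortens several computations. So I would aim to produce, for every $p>7$, a periodic point of $f_{1/4}$ distinct from $1/2$.

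First I would settle two residue classes. If $p\equiv 5$ or $11\pmod{12}$, then $-3$ is a non-square in $\F_p$, and the in-degree argument from the proof of the preceding Proposition shows that the component of $1/2$ in $\cG_{1/4}$ is exactly $\{1/2,-1/2\}$; since $p>3$ this is a proper component, so $\cG_{1/4}$ is disconnected. If $p\equiv 1\pmod 4$, I would instead look at cycles of size $2$: from $F_a^{(2)}(X)=X^2+X+a+1$ one gets $F_{1/4}^{(2)}(X)=X^2+X+5/4$, of discriminant $-4$. As $-1$ is a square in $\F_p$, this polynomial has two distinct roots in $\F_p$, and $F_{1/4}^{(2)}(1/2)=2\ne 0$ shows that neither of them is $1/2$; so neither is a fixed point, the two roots form a $2$-cycle, and again $\cG_{1/4}$ has more than one cycle. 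These two cases together cover every prime with $p\not\equiv 7\pmod{12}$.

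The hard part — and the only case left — is $p\equiv 7\pmod{12}$ with $p>7$. Here $-1$ is a non-square, so no $2$-cycle exists, while $-3$ is a square, so the Proposition's in-degree argument yields no small component; and, decisively, the prime $p=7$ itself lies in this class yet \emph{does} give a connected graph, so no argument depending only on the residue of $p$ modulo a fixed integer can possibly finish, and the hypothesis $p>7$ must be used in an essential way. One is forced to exhibit a periodic point of period at least $3$, i.e.\ an $\F_p$-root of some $F_{1/4}^{(n)}(X)$ with $n\ge 3$ — for example $F_{1/4}^{(3)}(X)=\bigl(f_{1/4}^{(3)}(X)-X\bigr)/\bigl(f_{1/4}^{(1)}(X)-X\bigr)$ has degree $6$. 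But no finite list of periods can suffice: each $F_{1/4}^{(n)}$ is a fixed polynomial over $\Q$ which — for the small $n$ one checks by hand, and conjecturally for all $n\ge 2$ — has no rational root and is typically irreducible of large degree, so by the Chebotarev density theorem it has no $\F_p$-root for a positive proportion of primes $p$. A genuine proof would therefore have to handle infinitely many periods at once, controlling the factorisation modulo $p$ of the whole family $\{F_{1/4}^{(n)}\}_{n\ge 2}$ in a uniform and effective way — an arboreal-Galois problem for which explicit bounds excluding every sporadic exception past $p=7$ appear to lie beyond current methods, which is precisely why the statement is offered only as a conjecture. One can still trim the exceptional set by hand — for instance, the two quadratics whose discriminants decide the preimages of the two primitive cube roots of unity have discriminant product $13$, so for $13$ a non-square in $\F_p$ exactly one of those preimage sets is empty — but no argument of bounded depth can exhaust the class $p\equiv 7\pmod{12}$.
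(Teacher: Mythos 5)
The statement you are addressing is posed in the paper only as a conjecture: the authors give no proof, just the computation that for all primes $p\le 100000$ the only examples are $X^2+1$ over $\F_3$ and $X^2+2$ over $\F_7$, together with the immediately preceding Proposition settling the classes $p\equiv 5,11\pmod{12}$. Your write-up is correctly calibrated to this: you do not claim a proof, and the parts you do prove are right. The reduction to $a=1/4$ with fixed point $1/2$, and the restatement as ``$\cG_{1/4}$ is disconnected for every $p>7$'', match the paper; your treatment of $p\equiv 5,11\pmod{12}$ is exactly the paper's Proposition. Your case $p\equiv 1\pmod 4$ is correct and goes \emph{beyond} the paper: $F_{1/4}^{(2)}(X)=X^2+X+5/4$ has discriminant $-4$, hence two distinct roots in $\F_p$ precisely when $-1$ is a square, and since $a=1/4\ne -3/4$ these roots are not fixed points (this is the same degeneracy analysed in the paper's proof that $\cC_2=(p-1)/2$), so they form a genuine $2$-cycle and $\cG_{1/4}$ has at least two cycles. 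This reduces the conjecture to the single class $p\equiv 7\pmod{12}$ with $p>7$, a reduction the paper does not record and which would be a worthwhile addition. Your assessment of the remaining case is also sound: $p=7$ lies in that class and does give a connected graph, and each fixed dynatomic polynomial $F_{1/4}^{(n)}$ has no $\F_p$-root for a positive proportion of primes by Chebotarev, so no finite list of cycle lengths can close the case --- consistent with the authors leaving the statement as a conjecture. One caution: your closing aside about the preimages of the primitive cube roots of unity under the conjugate $g(X)=X^2+X$ (discriminant product $13$) concerns preperiodic points feeding into the tree above the fixed point $0$, not the existence of a second cycle, so it does not actually trim the exceptional set; I would drop or rephrase that remark.
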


\section{Counting cyclic points in functional graphs}
\label{sect:cyclic}

We now assess the number of cyclic points in functional graphs modulo $p$.
For the minimal and maximal numbers of cyclic points in graphs $\cG_a$, we refer to~\cite[Table~4.1]{KLMMSS},
where the cases $a=0,-2$ are excluded.
Roughly speaking, the reason why these two cases are excluded is that the number of cyclic points is maximized on the cases
$a=0,-2$ quite often; see \cite[Section 4.3]{KLMMSS} for more details.
In this section, we also follow this convention.

Let $C_a$ be the total number of cyclic points of $\cG_a$, and let $c_a$
be the largest number of cyclic points in a single component of $\cG_a$.
Clearly we have $C_a \ge c_a$ for any $a \in \F_p$  and $C_a = c_a$ when $a \in \cI_p$.

Furthermore, we define the average and largest values of these quantities:
\begin{align*}
&\overline{C_p} = \frac{1}{p-2} \sum_{a \in \F_p \setminus \{0,-2\}} C_a,     &\bC_p= \max \left\{C_a :~a \in \F_p \setminus \{0,-2\} \right\}; \\
&\overline{c_p} = \frac{1}{p-2} \sum_{a \in \F_p \setminus \{0,-2\}} c_a,       &\bc_p = \max \left\{ c_a:~a \in \F_p  \setminus \{0,-2\}\right\};\\
&\overline{c_p}^*=    \frac{1}{I_p} \sum_{a \in \cI_p} c_a,       &\bc_p^*= \max \left\{c_a:~a \in \cI_p \right\}.
\end{align*}
We remark again that $\cI_p \subseteq \F_p \setminus \{0,-2\}$ if $p>3$.


Numerical experiments in~\cite[Section~4.3]{KLMMSS}  suggest that the average number of cyclic points modulo $p$, taken
over all graphs  modulo $p$ (excluding $a=0,-2$), is $\sqrt{\pi p/2}$, which is consistent with the behaviour of random maps
(see~\cite[Theorem~2(ii)]{FO2}).
Here we show that this is not the case for connected graphs (see Table~\ref{table:cyclic_points}).
In that case,  $\overline{c_p}^*$ is smaller than $\overline{C_p}$, i.e.
there are  fewer cyclic points than those for non-connected graphs on average.
Notice that both $\overline{c_p}^*$ and $\overline{c_p}$ are both close to
$\sqrt{2p/\pi}$ (and although close to each other, $\overline{c_p}^*$ is slightly larger).


\begin{table}[H]
{\small
	\begin{tabular}{r r r r r r r r}
	\hline \\[-2.2ex]
	\multicolumn{1}{c}{$p$} &
        \multicolumn{1}{c}{$\overline {C_p} $} &
	\multicolumn{1}{c}{$\sqrt{\pi p/2}$} &
  \multicolumn{1}{c}{$\overline {c_p} $} &
  \multicolumn{1}{c}{$\overline {c_p}^* $} &
	\multicolumn{1}{c}{$\sqrt{2p/\pi}$}\\[0.2ex]
	\hline\\[-1.5ex]
	 500,009 &  886.224 &  886.235 &  553.445 &  573.355 &  564.194\\
   500,029 &  885.990 &  886.253 &  553.312 &  587.750 &  564.205\\
   500,041 &  885.069 &  886.263 &  553.175 &  568.208 &  564.212\\
   500,057 &  884.963 &  886.277 &  552.870 &  586.037 &  564.221\\
   500,069 &  885.831 &  886.288 &  552.952 &  558.285 &  564.229\\
   500,083 &  884.970 &  886.300 &  552.692 &  564.995 &  564.236\\
   500,107 &  884.507 &  886.322 &  552.674 &  562.690 &  564.250\\
   500,111 &  884.341 &  886.325 &  552.157 &  575.976 &  564.252\\
   500,113 &  885.160 &  886.327 &  552.988 &  568.057 &  564.253\\
   500,119 &  884.559 &  886.332 &  552.597 &  569.750 &  564.257\\
   500,153 &  884.834 &  886.363 &  552.900 &  589.146 &  564.276\\
   500,167 &  885.756 &  886.375 &  552.525 &  560.095 &  564.284\\
   600,011 &  969.139 &  970.822 &  605.632 &  611.914 &  618.044\\
   700,001 & 1,047.771 & 1,048.599 &  654.317 &  667.624 &  667.559\\
   800,011 & 1,120.427 & 1,121.006 &  700.047 &  703.061 &  713.655\\
   900,001 & 1,188.822 & 1,188.999 &  742.619 &  762.673 &  756.940\\
  1,000,003 & 1,252.452 & 1,253.316 &  782.026 &  793.388 &  797.886\\
  2,000,003 & 1,772.078 & 1,772.455 & 1,106.815 & 1,134.598 & 1,128.380\\
	\hline\\
	\end{tabular}
}
\caption{Average number of cyclic points in graphs modulo $p$ (excluding $a=0,-2$)}
\label{table:cyclic_points}
\end{table}

In Table~\ref{table:max_cyclic_points}, one can see that the largest cycles usually
do not appear in the connected graphs, which appears surprising and shows the existence of components with
a large cycle even when the graph is disconnected.
In addition, the difference $\bc_p-\bc_p^*$ is large, while the difference of $\bC_p$ and $\bc_p$ is small.


\begin{table}[H]
{\small
	\begin{tabular}{r r r r r r}
	\hline \\[-2.2ex]
	\multicolumn{1}{c}{$p$} &
	\multicolumn{1}{c}{$\bC_p$} & 
	\multicolumn{1}{c}{$\bc_p$} & 
	\multicolumn{1}{c}{$\bc_p^*$} \\[0.2ex] 
	\hline\\[-1.5ex]
	 500,009 & 3,578 & 3,164 & 2,319 \\
   500,029 & 3,620 & 3,291 & 2,327 \\
   500,041 & 3,798 & 3,118 & 2,333 \\
   500,057 & 3,468 & 3,319 & 2,423 \\
   500,069 & 3,556 & 3,129 & 2,089 \\
   500,083 & 3,596 & 3,050 & 2,131 \\
   500,107 & 3,527 & 3,232 & 2,643 \\
   500,111 & 3,732 & 3,237 & 2,244 \\
   500,113 & 3,805 & 3,232 & 2,335 \\
   500,119 & 3,873 & 3,142 & 2,275 \\
   500,153 & 3,472 & 3,380 & 2,754 \\
   500,167 & 3,644 & 3,159 & 2,770 \\
   600,011 & 3,847 & 3,488 & 3,265 \\
   700,001 & 4,350 & 3,670 & 2,950 \\
   800,011 & 4,600 & 4,242 & 3,208 \\
   900,001 & 4,997 & 4,274 & 3,245 \\
  1,000,003 & 5,101 & 4,639 & 3,117\\
  2,000,003 & 7,637 & 6,848 & 4,309\\
	\hline\\
	\end{tabular}
}
\caption{Maximum number of cyclic points in graphs modulo $p$ (excluding $a=0,-2$)}
\label{table:max_cyclic_points}
\end{table}

Let us also define the following three families of parameters $a$ on which the values
$\bC_p$, $\bc_p$ and $\bc_p^*$ are achieved, that is
\begin{align*}
&\sA_p = \left\{a\in \F_p\setminus \{0,-2\}:~C_a =\bC_p \right\}, \\
&\sB_p = \left\{a\in \F_p\setminus \{0,-2\}:~c_a =\bc_p \right\},\\
& \sB_p^* = \left\{a\in \cI_p:~c_a =\bc_p^* \right\}.
\end{align*}
It is certainly interesting to compare the sizes $A_p =\# \sA_p$, $B_p =\# \sB_p$
and  $B_p^* =\# \sB_p^*$   and also investigate the
mutual intersections between these families.


We find that typically these sets have one value of $a$ in common,
and  rarely more than two.
As $p$ increases, the frequency of the sets having $2$ or more elements decreases,
but does not disappear completely,
as can be seen in Table~\ref{table:size_of_ABsets}.

\begin{table}[H]
{\small
	\begin{tabular}{l | rrr | rrr | rrr}
	\hline \\[-2.2ex]
  &
  \multicolumn{3}{c}{$A_p$} &
  \multicolumn{3}{c}{$B_p$} &
  \multicolumn{3}{c}{$B_p^*$} \\
  range of $p$ &
  \multicolumn{1}{c}{$=1$} & \multicolumn{1}{c}{$=2$} & \multicolumn{1}{c}{$\ge3$} &
  \multicolumn{1}{c}{$=1$} & \multicolumn{1}{c}{$=2$} & \multicolumn{1}{c}{$\ge3$} &
  \multicolumn{1}{c}{$=1$} & \multicolumn{1}{c}{$=2$} & \multicolumn{1}{c}{$\ge3$}\\
	\hline\\[-1.5ex]
  $[3,10^4]$                 & 1,182 & 39 & 7 & 1,159 & 65 & 4 & 1,193 & 35 & 0\\
  $[10^4,2\cdot10^4]$        & 1,013 & 20 & 0 & 1,010 & 22 & 1& 1,019 & 14 & 0 \\
  $[2\cdot10^4,3\cdot10^4]$  &  967 & 14 & 2 &  970 & 13 & 0&  976 &  7 & 0 \\
  $[3\cdot10^4,4\cdot10^4]$  &  949 &  9 & 0 &  941 & 17 & 0&  950 &  8 & 0 \\
  $[4\cdot10^4,5\cdot10^4]$  &  921 &  8 & 1 &  921 &  9 & 0&  926 &  4 & 0 \\
  $[5\cdot10^4,6\cdot10^4]$  &  915 &  9 & 0 &  920 &  4 & 0 &  921 &  3 & 0\\
  $[6\cdot10^4,7\cdot10^4]$  &  868 & 10 & 0 &  872 &  6 & 0 &  868 &  9 & 1\\
  $[7\cdot10^4,8\cdot10^4]$  &  895 &  7 & 0 &  897 &  5 & 0 &  899 &  3 & 0\\
  $[8\cdot10^4,9\cdot10^4]$  &  869 &  7 & 0 &  869 &  7 & 0 &  866 & 10 & 0\\
  $[9\cdot10^4,10^5]$        &  874 &  5 & 0 &  878 &  1 & 0 &  876 &  3 & 0\\
  $[10^5,10^5+10^3]$         &   81 &  0 & 0 &   79 &  2 & 0 &   81 &  0 & 0\\
  $[10^6,10^6+10^3]$         &   74 &  1 & 0 &   75 &  0 & 0 &   74 &  1 & 0\\
	\hline\\
	\end{tabular}
}
\caption{Values of $A_p$, $B_p$, and $B_p^*$}
\label{table:size_of_ABsets}
\end{table}

For the set intersections, we start with
$\sA_p \cap \sB_p^*$. With Table~\ref{table:cyclic_points}, we have observed that
$\overline{C_p} > \overline{c_p}^*$, thus it is reasonable to expect that
$\sA_p \cap \sB_p^*$ is empty.
We remark that if $\sA_p \cap \sB_p^*$ is not empty,  then $\bC_p=\bc_p=\bc_p^*$, and so for any $a\in \sB_p$ the graph $\cG_a$ is connected, and thus $\sB_p = \sB_p^*$.
Therefore, for any prime $p$, if $\bc_p < \bC_p$, then we must have that $\sA_p \cap \sB_p^*$ is empty.
Our experiments with odd prime $p < 10^5$ counted only 20 occurrences of primes where
the intersection is non-empty and in fact contains only one value of $a$, shown in Table~\ref{table:ApBpstar}.

\begin{table}[H]
{\small
	\begin{tabular}{cccc}
	\hline \\[-2.2ex]
   $p$ & value of $a$ & $p$ & value of $a$ \\
	\hline\\[-1.5ex]
   3  & 2      &    271 &    147 \\
   5  & 1      &  2,647 &  1,445 \\
   7  & 3      &  3,613 &  2,653 \\
   11 & 6      &  6,131 &  3,555 \\
   13 & 1      &  6,719 &    107 \\
   17 & 3      & 17,921 &  8,370 \\
   19 & 13     & 18,077 & 15,557 \\
   29 & 4      & 36,229 &  2,229 \\
  157 & 141    & 53,611 & 23,630 \\
  191 & 97     & 64,667 & 60,638 \\
	\hline\\
	\end{tabular}
}
\caption{Values of $p$ with non-empty $\sA_p \cap \sB_p^*$}
\label{table:ApBpstar}
\end{table}

Since we have observed only one value of $a$ for each prime $p$ in the above table,
we conjecture that:

\begin{conj}
 For any prime $p \ge 3$, we have $\#\(\sA_p \cap \sB_p^*\) \le 1$.
\end{conj}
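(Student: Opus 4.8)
While we are unable to settle this, we indicate a plausible line of attack; as the last paragraph explains, a crucial analytic input seems to be beyond current reach, so this should be read as a template rather than a proof. The first step is to reformulate the statement purely in terms of cycle lengths. Since every vertex of a finite functional graph is eventually periodic and each component of $\cG_a$ carries a unique cycle, $\cG_a$ is connected exactly when its periodic points form a single cycle; write $\ell_a$ for that cycle's length in this case. Then $C_a = c_a = \ell_a$ for connected $\cG_a$, so $a \in \sB_p^*$ means precisely that $\cG_a$ is connected with $\ell_a = \bc_p^*$, while $a \in \sA_p \cap \sB_p^*$ additionally forces $\ell_a = C_a = \bC_p$ and hence $\bc_p^* = \bC_p$. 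Conversely, if some connected $\cG_a$ has $\ell_a = \bC_p$ then $\bc_p^* = \bC_p$ and $a \in \sA_p \cap \sB_p^*$. Thus the conjecture reduces to: \emph{at most one $a \in \F_p$ gives a connected graph whose unique cycle has length equal to the global maximum $\bC_p$ of the number of periodic points.}

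The second step is to encode such extremal $a$ algebraically. By the identity $f_a^{(n)}(X) - X = \prod_{d \mid n} F_a^{(d)}(X)$ and the dynatomic description already used in Algorithm~\ref{algo:one_component}, a connected $\cG_a$ with cycle length $\ell$ corresponds to $F_a^{(\ell)}(X)$ having exactly $\ell$ roots in $\F_p$, all of exact period $\ell$, while $f_a^{(k)}(X) - X$ has no $\F_p$-root for every proper divisor $k$ of $\ell$ and $F_a^{(m)}(X)$ has none for the relevant $m$ incomparable to $\ell$. Equivalently, fixing a candidate orbit $x_0 \mapsto x_1 \mapsto \cdots \mapsto x_{\ell-1} \mapsto x_0$ with $x_{i+1} = x_i^2 + a$ and indices taken mod $\ell$, the parameter is recovered from any consecutive pair via $a = x_{i+1} - x_i^2$; closing the orbit together with the consistency of these $\ell$ expressions for $a$ cuts out an explicit variety whose $\F_p$-points parametrise all graphs possessing an $\ell$-cycle. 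One would then seek an upper bound for the number of $a$ arising this way with $\ell$ \emph{equal to} the maximum, and argue that two distinct such $a$ cannot coexist.

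The heart of the matter is the uniqueness step. Suppose $a_1 \ne a_2$ both give connected graphs with cycle length $\bC_p$; each carries a closed orbit of length $\bC_p$ in $\F_p$. One route is a counting bound: if $\bC_p$ is provably of the expected order (say at least $c\sqrt p$), then the number of pairs $(a,\text{$\ell$-cycle})$ with $\ell \ge \bC_p$ ought to be too small to admit two distinct values of $a$. A second route is a rigidity argument: using degree bookkeeping in $f_a^{(n)}(X) - X = \prod_{d \mid n} F_a^{(d)}(X)$ one tries to show that if $\cG_{a_1}$ is connected with the maximal cycle length, then $\cG_{a_2}$ is forced either to have \emph{strictly more} periodic points than $\ell_{a_1}$, contradicting $a_1 \in \sA_p$, or to have an extremely constrained cyclic structure that can be excluded directly.

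The main obstacle is exactly the input both routes rely on. We possess no unconditional lower bound for $\bC_p$ of the expected size, and, more seriously, no provable description of the distribution of the number of periodic points, or of the longest cycle length, of $\cG_a$ as $a$ varies over $\F_p$; this is only \emph{conjectured} to follow the random-mapping statistics of~\cite{FO2}, and even the average number of cyclic points is not rigorously established. Without such equidistribution, neither the counting estimate nor the ``forces extra periodic points'' implication can be made rigorous, and I expect a genuine proof of the conjecture to require substantially new results on the arithmetic statistics of the iterates of $X^2 + a$ over $\F_p$.
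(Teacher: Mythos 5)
This statement is a \emph{conjecture} in the paper: it is supported only by the computation that for odd primes $p<10^5$ the intersection $\sA_p\cap\sB_p^*$ is non-empty for just twenty primes and in each such case contains exactly one value of $a$. The paper contains no proof, so there is no argument of the authors to measure yours against. You are candid that your text is a template rather than a proof, and that self-assessment is accurate. Your first step is correct and worth keeping: since a connected $\cG_a$ has all its cyclic points on a single cycle of some length $\ell_a$, one has $C_a=c_a=\ell_a$, and $a\in\sA_p\cap\sB_p^*$ holds if and only if $\cG_a$ is connected with $\ell_a=\bC_p$ (forcing $\bc_p^*=\bC_p$). This matches, and slightly sharpens, the paper's own remark that non-emptiness of the intersection forces $\bC_p=\bc_p=\bc_p^*$ and $\sB_p=\sB_p^*$.

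The genuine gap is the uniqueness step, and it is worth naming why neither of your two routes can work as described. The condition $\ell_a=\bC_p$ is not an algebraic condition on $a$ alone: it is a global extremality condition comparing $\cG_a$ against every other $\cG_b$, $b\in\F_p$. Your variety-counting route only sees, for a \emph{fixed} $\ell$, the set of $a$ whose graph has an $\ell$-cycle, and by Proposition~\ref{prop:Ck} that set has roughly $p/\ell$ elements -- far more than one -- so no bound of this shape can isolate the extremal $a$. Your rigidity route posits that a second extremal parameter $a_2$ would be ``forced'' to have more periodic points than $a_1$, but there is no mechanism coupling the dynamics of $X^2+a_1$ and $X^2+a_2$; nothing in the factorisation $f_a^{(n)}(X)-X=\prod_{d\mid n}F_a^{(d)}(X)$ relates distinct values of $a$. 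Finally, even heuristically the bound $\le 1$ (as opposed to $O(1)$) is delicate: the paper's Table~\ref{table:size_of_ABsets} shows that the analogous extremal sets $\sA_p$, $\sB_p$ and $\sB_p^*$ each have two or more elements for a nonvanishing proportion of primes, so an argument that merely shows the extremal set is ``small'' cannot yield the conjecture. As you conclude yourself, settling this would require new unconditional information about the distribution of cycle lengths of $X^2+a$ over $\F_p$, which is currently out of reach; the proposal should therefore be read as a correct reformulation plus an honest obstruction analysis, not as a proof.
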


We also consider the intersection $\sB_p \cap \sB_p^*$; see Table~\ref{table:BpBpstar}.
Clearly, if $\sB_p \cap \sB_p^*$ is not empty, then we have $\bc_p=\bc_p^*$.
One could expect the number of primes with non-empty intersections
to decrease as $p$ increases, however even if our experiments show some reduction overall, it remains unclear.

\begin{table}[H]
{\small
	\begin{tabular}{lrrc}
	\hline \\[-2.2ex]
   range of $p$ &
  freq &
  \#primes &
  \% \\
	\hline\\[-1.5ex]
 $[3,10^4]$                 & 104 & 1,228 & 8.06\% \\
 $[10^4,2\cdot10^4]$        &  35 & 1,033 & 3.19\% \\
 $[2\cdot10^4,3\cdot10^4]$  &  32 &  983 & 3.26\% \\
 $[3\cdot10^4,4\cdot10^4]$  &  20 &  958 & 1.98\% \\
 $[4\cdot10^4,5\cdot10^4]$  &  19 &  930 & 2.04\% \\
 $[5\cdot10^4,6\cdot10^4]$  &  16 &  924 & 1.73\% \\
 $[6\cdot10^4,7\cdot10^4]$  &  20 &  878 & 2.28\% \\
 $[7\cdot10^4,8\cdot10^4]$  &  15 &  902 & 1.66\% \\
 $[8\cdot10^4,9\cdot10^4]$  &  15 &  876 & 1.71\% \\
 $[9\cdot10^4,10^5]$        &   6 &  879 & 0.68\% \\
 $[10^5,10^5+10^3]$         &   0 &   81 & 0.00\% \\
 $[10^6,10^6+10^3]$         &   1 &   75 & 1.33\% \\
	\hline\\
	\end{tabular}
}
\caption{Primes with non-empty $\sB_p \cap \sB_p^*$}
\label{table:BpBpstar}
\end{table}

The most surprising result comes from the observation of the intersection
$\sA_p \cap \sB_p$. As  Table~\ref{table:BpAp} shows, the event that this intersection is not empty is rather common.
For any $a \in \sA_p \cap \sB_p$, the graph $\cG_a$ not only has the maximal number of cyclic points but also has a maximal cycle.

Note that for the last two rows we only give primes  in the ranges $[10^5, 10^5+10^3]$
and  $[10^6, 10^6+10^3]$, respectively, due to the limits of our current computational facilities.

\begin{table}[H]
{\small
	\begin{tabular}{lrrc}
	\hline \\[-2.2ex]
  range of $p$ &
  freq &
  \#primes &
  \% \\
	\hline\\[-1.5ex]
  $[3,1\cdot10^4]$           & 268 & 1,228 & 20.36\%\\
  $[10^4,2\cdot10^4]$        & 197 & 1,033 & 18.87\%\\
  $[2\cdot10^4,3\cdot10^4]$  & 153 &  983 & 15.16\%\\
  $[3\cdot10^4,4\cdot10^4]$  & 148 &  958 & 15.24\%\\
  $[4\cdot10^4,5\cdot10^4]$  & 126 &  930 & 13.55\%\\
  $[5\cdot10^4,6\cdot10^4]$  & 167 &  924 & 17.97\%\\
  $[6\cdot10^4,7\cdot10^4]$  & 143 &  878 & 16.17\%\\
  $[7\cdot10^4,8\cdot10^4]$  & 143 &  902 & 15.74\%\\
  $[8\cdot10^4,9\cdot10^4]$  & 144 &  876 & 16.44\%\\
  $[9\cdot10^4,10^5]$        & 147 &  879 & 16.72\%\\
  $[10^5,10^5+10^3]$         &  13 &   81 & 16.05\%\\
  $[10^6,10^6+10^3]$         &   9 &   77 & 11.69\%\\
	\hline\\
	\end{tabular}
}
\caption{Primes with non-empty $\sA_p \cap \sB_p$}
\label{table:BpAp}
\end{table}

\section{Statistics of small cycles}
\label{sect:smallcyclic}

We now study components by analysing the distribution of the size of their cycles.
Let $\cC_{a,k}$ be the number of cycles of length $k$
in the graph $\cG_a$. Let
$$
\cC_k = \sum_{a\in \F_p} \cC_{a,k}
$$
be the number of cycles of length $k$ over all
graphs modulo $p$.
Clearly, we have $\cC_k=0$ for any $k\ge p/2$; see~\cite[Theorems~1 and~2]{PMMY} for better bounds of $k$.

\begin{prop}   \label{prop:Ck}
For any integer $k\ge 1$, there is a constant $D_k$ depending only on $k$ such that for any prime $p >  D_k$ we have
$$
\cC_k = p/k + O\( 4^k k^{-1} p^{1/2}\).
$$
\end{prop}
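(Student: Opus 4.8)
The plan is to count cyclic points by means of the dynatomic polynomials $F_a^{(k)}(X)$ and then relate the count of cycles of length exactly $k$ to the count of roots. Fix $k\ge 1$. For each $a\in\F_p$, the number of $X\in\F_p$ lying on a cycle of length exactly $k$ is the number of roots in $\F_p$ of $F_a^{(k)}(X)$ that are not roots of $F_a^{(d)}(X)$ for proper divisors $d\mid k$ — but as noted after the definition of the dynatomic polynomials, roots of $F_a^{(k)}$ need not all be genuinely periodic of exact period $k$. The cleaner quantity is $N_k(a):=\#\{X\in\F_p: f_a^{(k)}(X)=X\}$, the number of roots of $f_a^{(k)}(X)-X$ in $\F_p$. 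Since each cycle of length $\ell\mid k$ contributes exactly $\ell$ such points and these are disjoint over distinct $\ell$, we have $N_k(a)=\sum_{\ell\mid k}\ell\,\cC_{a,\ell}$, so by Möbius inversion $k\,\cC_{a,k}=\sum_{\ell\mid k}\mu(k/\ell)\,N_\ell(a)$, whence
\[
k\,\cC_k=\sum_{\ell\mid k}\mu(k/\ell)\sum_{a\in\F_p}N_\ell(a).
\]
So it suffices to estimate $M_\ell:=\sum_{a\in\F_p}N_\ell(a)$ for each $\ell\mid k$ and then assemble.

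The sum $M_\ell$ counts pairs $(X,a)\in\F_p^2$ with $f_a^{(\ell)}(X)=X$. Now $f_a^{(\ell)}(X)$ is a polynomial in $X$ and $a$; I would show that, viewed as a curve in the affine plane, $f_a^{(\ell)}(X)-X=0$ is (for $p$ large relative to $\ell$) a geometrically irreducible plane curve — this is where one invokes that for the map $X^2+a$ the $\ell$-th iterate has a known, well-behaved dynatomic factorization, and irreducibility of the dynatomic curve $\{F_a^{(\ell)}(X)=0\}$ over $\overline{\F_p}$ is known for all $\ell$ (for characteristic $0$ this is classical; in positive characteristic it holds once $p$ is large enough, which is the role of $D_k$). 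Granting irreducibility, the Lang–Weil / Weil bound for curves gives $M_\ell = p + O(\deg\cdot p^{1/2})$ where the relevant degree is controlled by $2^\ell$: indeed $f_a^{(\ell)}(X)-X$ has degree $2^\ell$ in $X$ and degree $2^{\ell-1}$ in $a$, so its total degree is $O(2^\ell)$, and the error term in the point count over the (affine part of the) curve — after accounting for points at infinity and possible singular points, whose number is also $O(4^\ell)$ — is $O(4^\ell p^{1/2})$. Substituting into the Möbius sum,
\[
k\,\cC_k=\sum_{\ell\mid k}\mu(k/\ell)\bigl(p+O(4^\ell p^{1/2})\bigr)=p\sum_{\ell\mid k}\mu(k/\ell)+O\!\Bigl(p^{1/2}\sum_{\ell\mid k}4^\ell\Bigr).
\]
For $k\ge 2$ the first sum vanishes unless — wait, $\sum_{\ell\mid k}\mu(k/\ell)=[k=1]$, so this would give $\cC_k=O(4^k k^{-1}p^{1/2})$ for $k\ge 2$ and $\cC_1=p+O(p^{1/2})$; that is not quite the claimed $p/k$. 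So the statement must instead be counting cyclic \emph{points} or using a slightly different normalization: re-examining, the claim $\cC_k=p/k+O(\cdot)$ is what one gets if one instead estimates $\sum_a \#\{\text{points of exact period }k\}$ directly as $M_k-\sum_{\ell\mid k,\ell<k}(\cdots)$; but in fact the right reading is that $M_k$ itself — counting \emph{all} roots of $f_a^{(k)}(X)=X$ summed over $a$, which equals $\sum_{\ell\mid k}\ell\cC_\ell$ — satisfies $M_k=p+O(4^kp^{1/2})$, and the dominant divisor contribution $\ell=k$ gives $k\cC_k\approx M_k-\sum_{\ell<k}\ell\cC_\ell = p+O(\cdots)-O(p^{1/2}\cdot 2^k)= p+O(4^k p^{1/2})$, hence $\cC_k=p/k+O(4^kk^{-1}p^{1/2})$. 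The point is that $\sum_{\ell\mid k,\ell<k}\ell\cC_\ell$ is, by induction on the divisors, already $O(2^kp^{1/2})+(\text{lower-order main terms})$ — more precisely each $\cC_\ell$ for $\ell<k$ is $p/\ell+O(4^\ell p^{1/2})$ by induction, and $\sum_{\ell\mid k,\ell<k}\ell\cdot(p/\ell)=p(d(k)-1)$ where... this still leaves a $p$-size term.

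\textbf{The main obstacle.} The genuine technical heart — and the step I expect to be hardest — is establishing geometric irreducibility of the dynatomic curve $\{F_a^{(k)}(X)=0\}\subset\mathbb{A}^2_{\overline{\F_p}}$ for $p>D_k$, and then extracting from it the correct main term. One must either cite the positive-characteristic irreducibility results for dynatomic curves of $X^2+a$ (Bousch, Morton, Vivaldi–Hannusch, etc.) or reduce mod $p$ from the characteristic-zero statement, which forces the hypothesis $p>D_k$ to avoid the finitely many bad primes where reduction fails; and one must carefully bound the number of points of $\{F_a^{(k)}=0\}$ at infinity and at singularities, both of which are $O(4^k)$ by a resultant/discriminant degree count, to make the Weil error term $O(4^kk^{-1}p^{1/2})$ after dividing by $k$. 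Once irreducibility is in hand, the point count and the Möbius bookkeeping are routine; all the difficulty is concentrated in the irreducibility input and in pinning down which combination of $M_\ell$'s produces exactly $p/k$ as the main term.
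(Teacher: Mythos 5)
Your write-up does not reach a proof: you explicitly leave open ``which combination of $M_\ell$'s produces exactly $p/k$ as the main term'', and the reason you get stuck is a concrete error. The plane curve $f_a^{(\ell)}(X)-X=0$ is \emph{not} geometrically irreducible for $\ell>1$: it factors as $\prod_{d\mid \ell}F_a^{(d)}(X)=0$, a union of $\tau(\ell)$ dynatomic curves (where $\tau$ denotes the number of divisors), and it is each individual factor $F_A^{(d)}(X)$ that is absolutely irreducible (by Morton and Morton--Vivaldi in characteristic $0$, hence modulo $p$ for $p>D_k$ by Ostrowski's theorem --- this is indeed the role of $D_k$, as you guessed). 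Consequently $M_\ell=\tau(\ell)\,p+O(4^{\ell}p^{1/2})$, not $p+O(4^{\ell}p^{1/2})$. Had you inserted the correct main term, your own M\"obius computation would close: since $\tau=\mathbf{1}*\mathbf{1}$, one has $\sum_{\ell\mid k}\mu(k/\ell)\tau(\ell)=1$ for every $k\ge 1$, so $k\,\cC_k=\sum_{\ell\mid k}\mu(k/\ell)M_\ell=p+O(4^{k}p^{1/2})$, which is the claim. Your exact identity $N_\ell(a)=\sum_{d\mid\ell}d\,\cC_{a,d}$ and the inversion $k\,\cC_{a,k}=\sum_{\ell\mid k}\mu(k/\ell)N_\ell(a)$ are correct and would, pleasantly, sidestep the issue you flagged that roots of $F_a^{(k)}$ need not have exact period $k$; but as written the argument has a genuine gap at the main term.

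For comparison, the paper avoids M\"obius inversion altogether: it applies the Hasse--Weil bound directly to the single absolutely irreducible dynatomic curve $F_a^{(k)}(x)=0$, obtaining $\#\{(a,x):F_a^{(k)}(x)=0\}=p+O(4^{k}p^{1/2})$, and then handles the mismatch between roots of $F_a^{(k)}$ and points of exact period $k$ by a discriminant argument from Morton--Patel: any such mismatch forces the discriminant of $F_a^{(k)}$ in $X$ to vanish, and since that discriminant has degree at most $4^k$ as a polynomial in $a$, this happens for at most $4^k$ values of $a$, contributing only $O(8^k)$ to the count. Either route works once the irreducibility input is attached to the right object, namely the dynatomic factors rather than the full periodic-point curve.
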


\begin{proof}
We can assume that $p>k$.
For any fixed $a$, notice that any point $x$ contributing to $\cC_{a,k}$ is a root of the polynomial $F_a^{(k)}(X)$.
Conversely, any root $x$ of $F_a^{(k)}(X)$ contributes to $\cC_{a,d}$ for some $d\mid k$
(possibly $d\ne k$).
Thus, we have
$$
k\cC_k \le \#\{(a,x) \in \F_p^2:~F_a^{(k)}(x) = 0\}.
$$
Moreover, from~\cite[Theorem~2.4~(c)]{MorPa} and noticing $p \nmid k$, we know that if $F_a^{(d)}(x)=0$ and $F_a^{(k)}(x)=0$ with $d<k$,
where $x$ is a point lying in a cycle of length $k$,
then $(X-x)^2 \mid F_a^{(k)}(X)$, that is, the discriminant of $F_a^{(k)}(X)$ is zero.
Note that as a polynomial in $X$ the degree of $F_a^{(k)}(X)$ is at most $2^k$, and as a polynomial in $a$ the degree of $F_a^{(k)}(X)$ is at most $2^{k-1}$.
Then, as a polynomial in $a$, the degree of the discriminant of $F_a^{(k)}(X)$ is at most $4^k$.
Thus, except for at most $4^k$ values of $a$, we have that $F_a^{(k)}(X)$ is a simple polynomial in $X$.
Hence, we have
\begin{equation}   \label{eq:Ck}
k\cC_k = \#\{(a,x) \in \F_p^2:~F_a^{(k)}(x) = 0\} + O(8^k).
\end{equation}

In addition, combining \cite[Corollary~1 to Theorem~B]{Mort} with \cite[Proposition 3.2]{MorVi},
if we view $f_A(X) = X^2 +A $ as an integer polynomial  in variables $A$ and $X$, then $F_A^{(k)}(X) \in \Z[A,X]$ is an absolutely irreducible polynomial.
Then, by Ostrowski's theorem, there exists a positive integer $D_k$
depending only on $k$ such that for any $p > D_k$
 the polynomial $F_A^{(k)}(X)$ is absolutely irreducible
modulo $p$ in variables $A$ and $X$. It is also easy to see by induction on $k$
that $ f_A^{(k)}(X)$ is of total degree  at most $2^k$ as a bivariate polynomial in $A$ and $X$, and the same is true for $F_A^{(k)}(X)$.
Thus, by the Hasse-Weil bound (see~\cite[Section~VIII.5.8]{Lor}) we obtain
$$
\#\{(a,x) \in \F_p^2:~F_a^{(k)}(x) = 0\} = p + O(4^k p^{1/2}), \quad \textrm{as $p \to \infty$},
$$
which, together with~\eqref{eq:Ck}, implies the desired result (as we can always assume that
$D_k > 4^k$, so $4^k p^{1/2}> 8^k$).
\end{proof}

In particular, we see from  Proposition~\ref{prop:Ck} that for any fixed integer $k\ge 1$,
$$
\cC_k \sim p/k, \qquad \text{as} \ p \to \infty.
$$

Note that using~\cite[Theorem~1]{GaoRod} or~\cite[Satz~B]{Rup} or~\cite[Corollary]{Zannier}, one can obtain an explicit form for $D_k$. However, any such estimate has to depend on the size of the
coefficients of $F_A^{(k)}(X)$ (considered as a bivariate polynomial in $A$ and $X$ over $\Z$) and
is likely to be double exponential in $k$.

We can also compute the exact values of $\cC_1$ and $\cC_2$.

\begin{prop}   \label{prop:cycle}
For any odd prime $p$, we have $\cC_1=p$ and $\cC_2= (p-1)/2$.
\end{prop}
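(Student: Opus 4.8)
The plan is to prove both identities by elementary counting of periodic points over all the maps $f_a$ simultaneously.

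For $\cC_1$ this is immediate double counting. A cycle of length $1$ in $\cG_a$ is precisely a fixed point of $f_a$, so $\cC_{a,1}$ equals the number of roots in $\F_p$ of $f_a^{(1)}(X)-X = X^2-X+a$. Summing over $a$ gives
$$
\cC_1 = \#\{(a,x)\in\F_p^2 :\ x^2-x+a=0\},
$$
and for each $x\in\F_p$ the value $a=x-x^2$ is uniquely determined, so this count is exactly $p$.

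For $\cC_2$ the plan is to count points of exact period $2$, noting that each $2$-cycle contains exactly two such points, so $2\cC_2=\sum_{a\in\F_p}N_a$ where $N_a$ is the number of points of exact period $2$ in $\cG_a$. The points of period dividing $2$ are the roots of $f_a^{(2)}(X)-X = F_a^{(1)}(X)F_a^{(2)}(X) = (X^2-X+a)(X^2+X+a+1)$, and a point has exact period $2$ iff it is a root of $F_a^{(2)}(X)=X^2+X+a+1$ but not of $F_a^{(1)}(X)=X^2-X+a$. Comparing these two quadratics, a common root forces $2X+1=0$, hence $X=-1/2$ and then $a=-3/4$; thus for $a\ne -3/4$ every root of $F_a^{(2)}$ has exact period $2$, while for $a=-3/4$ one has $F_a^{(2)}(X)=(X+1/2)^2$, whose only root $-1/2$ is a fixed point, giving $N_{-3/4}=0$. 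For $a\ne -3/4$ the discriminant $-3-4a$ of $F_a^{(2)}$ is nonzero (so $F_a^{(2)}$ is separable) and $N_a = 1+\chi(-3-4a)$, where $\chi$ denotes the quadratic character of $\F_p$ with $\chi(0)=0$.

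It then remains to evaluate the resulting character sum: $2\cC_2 = (p-1) + \sum_{a\ne -3/4}\chi(-3-4a)$. Since $p$ is odd, the map $a\mapsto -3-4a$ is a bijection of $\F_p$, so $\sum_{a\in\F_p}\chi(-3-4a)=\sum_{t\in\F_p}\chi(t)=0$, and discarding the term $a=-3/4$ (which contributes $\chi(0)=0$) leaves the sum equal to $0$. Hence $2\cC_2=p-1$, i.e. $\cC_2=(p-1)/2$. There is no real obstacle here; the only point requiring care is the bookkeeping of whether a root of $F_a^{(2)}$ can coincide with a fixed point or be a repeated root, which is entirely accounted for by the single exceptional value $a=-3/4$ (and one should sanity-check $p=3$, where $-3/4=0$ and indeed $F_0^{(2)}(X)=X^2+X+1=(X+2)^2$ in $\F_3$ with root $X=1$ a fixed point of $x\mapsto x^2$, consistent with the general argument).
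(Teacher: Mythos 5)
Your proof is correct and follows essentially the same route as the paper's: both parts reduce to counting roots of $F_a^{(1)}$ and $F_a^{(2)}$ over all $a$, with the same identification of the single exceptional value $a=-3/4$ where a root of $F_a^{(2)}$ degenerates to a fixed point. The only cosmetic differences are that you count fixed points by summing over $x$ rather than over $a$, and you finish the $\cC_2$ count with the orthogonality of the quadratic character where the paper simply counts the $(p-1)/2$ values of $a$ for which $-a-3/4$ is a nonzero square.
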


\begin{proof}
First, note that any point $x$ contributing to $\cC_1$ is a root of $F_a^{(1)}(X)$ for some $a$, and also
$$
F_a^{(1)}(X) = X^2 - X + a=(X-1/2)^2 + a -1/4=0
$$
is solvable if and only if $1/4-a$ is a square.
Since there are $(p-1)/2$ squares in $\F_p^*$, we have $\cC_1 = p$.

Now, it is easy to see that
$$
F_a^{(2)}(X) = X^2 + X + a + 1.
$$
If a point $x$ lies in a cycle of length $2$ in $\cG_a$, then it is a root of $F_a^{(2)}(X)$ and also it is not a root of $F_a^{(1)}(X)$.
However, if there exists a point $x$ such that
$$
F_a^{(2)}(x)=F_a^{(1)}(x)=0,
$$
then we must have $x=-1/2, a=-3/4$.
So, if $a\ne -3/4$, then any root of $F_a^{(2)}(X)$ lies in a cycle of length $2$.
Thus, noticing that
$$
F_a^{(2)}(X) =(X+1/2)^2 + a + 3/4=0
$$
is solvable if and only if $-a-3/4$ is a square, we have $\cC_2=(p-1)/2$ and conclude
the proof.
\end{proof}

Table~\ref{tab:cyclic_point_dist} shows the $\cC_k$ for some
values of $p$ (in these cases, we also included the graphs $X^2$ and $X^2-2$).
This is consistent with Proposition~\ref{prop:Ck}.

\begin{table}[H]
{\small
	\begin{tabular}{r r r r r r r}
	\hline \\[-2.2ex]
	\multicolumn{1}{c}{$k$} &
	\multicolumn{2}{c}{$p = 100,003$} &
	\multicolumn{2}{c}{$p = 500,009$} &
	\multicolumn{2}{c}{$p = 1,000,003$} \\
  & $\cC_k$ & \multicolumn{1}{c}{$\fl{p/k}$}
  & $\cC_k$ & \multicolumn{1}{c}{$\fl{p/k}$}
  & $\cC_k$ & \multicolumn{1}{c}{$\fl{p/k}$}  \\
	\hline\\[-1.5ex]
    1 & 100,003 & 100,003 & 500,009 & 500,009 & 1,000,003 & 1,000,003  \\
    2 &  50,001 &  50,001 & 250,004 & 250,004 &  500,001 &  500,001  \\
    3 &  33,333 &  33,334 & 166,669 & 166,669 &  333,333 &  333,334  \\
    4 &  24,890 &  25,000 & 125,000 & 125,002 &  249,890 &  250,000  \\
    5 &  20,061 &  20,000 &  99,353 & 100,001 &  199,310 &  200,000  \\
    6 &  16,775 &  16,667 &  83,664 &  83,334 &  165,852 &  166,667  \\
    7 &  14,179 &  14,286 &  71,582 &  71,429 &  143,109 &  142,857  \\
    8 &  12,474 &  12,500 &  62,541 &  62,501 &  125,266 &  125,000  \\
	\hline\\
	\end{tabular}
}
\caption{Number of cycles of length $k$}
\label{tab:cyclic_point_dist}
\end{table}

\section{Distribution of components with size $k$}
\label{sect:smallsize}

We now study the components of functional graphs by analysing the distribution of their sizes.
For the minimal and maximal numbers of components in graphs $\cG_a$ as well as the popular component size, we refer to~\cite[Sections~4.4 and~4.5]{KLMMSS}.

Let ${\mathcal N}_p$ be the number of components taken over all ${\mathcal G}_a$ modulo $p$,
and let ${\mathcal N}_{p,k}$ be the number of those components with size $k > 0$
(that is, there are $k$ nodes in the component).
Furthermore, let
$$
  {\mathcal N}_{p,\text{even}}^K = \sum_{\substack{k \le K\\ \text{$k$ even}}} {\mathcal N}_{p,k}
  \quad
  \text{and}
  \quad
  {\mathcal N}_{p,\text{odd}}^K  = \sum_{\substack{k \le K\\ \text{$k$ odd}}} {\mathcal N}_{p,k}.
$$
Clearly,
$$
\cN_{p} = \cN_{p,\text{even}}^p + \cN_{p,\text{odd}}^p.
$$

We first have:

\begin{prop}
For any odd prime $p$,  $\cN_{p,2}=(p-1)/2$.
\end{prop}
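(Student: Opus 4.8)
The plan is to reduce $\cN_{p,2}$ to the evaluation of a Legendre-symbol sum over a quadratic. First I would pin down the structure of a component of size $2$. Since each component of $\cG_a$ carries a unique cycle, such a component has cycle length $1$ or $2$; it cannot have cycle length $2$, because if $\{x,y\}$ with $x\ne y$ formed a $2$-cycle $x\to y\to x$ with no further nodes, then $f_a^{-1}(x)$ would have to equal the single-element set $\{y\}$, whereas $f_a^{-1}(x)=\{z:z^2=x-a\}$ is a singleton only when $x-a=0$, forcing $y=0$ and, symmetrically, $x=0$, contradicting $x\ne y$. Hence a size-$2$ component consists of a fixed point $x$ of $f_a$ together with exactly one leaf mapping into it. For a fixed point one has $x^2+a=x$, so $x^2=x-a$ and $f_a^{-1}(x)=\{x,-x\}$; the leaf must be $-x$, which forces $x\ne 0$, and the component equals $\{x,-x\}$ precisely when $-x$ has no preimage, that is, when $\{z:z^2=-x-a\}=\emptyset$.

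Next I would convert this into a counting statement purely in terms of $x$. Taking $a=x-x^2$ (the unique $a$ making $x$ a fixed point), we have $-x-a=x^2-2x=x(x-2)$, so the condition above says exactly that $x(x-2)$ is a nonzero non-square modulo $p$ (this automatically excludes $x=0$ and $x=2$, i.e.\ $a=0$ and $a=-2$). Since each size-$2$ component determines its unique cyclic point $x$, and $x$ in turn determines $a=x-x^2$, summing over all $a$ gives a bijection between size-$2$ components (across all graphs $\cG_a$, $a\in\F_p$) and the corresponding values of $x$, so
$$
\cN_{p,2}=\#\{x\in\F_p :~ x(x-2)\ \text{is a nonzero non-square}\bmod p\}.
$$

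Finally I would evaluate this by a standard character-sum argument. Let $\chi$ be the Legendre symbol modulo $p$ and substitute $t=x-1$, so that $x(x-2)=t^2-1=(t-1)(t+1)$; then the count above equals $N:=\#\{t\in\F_p :~\chi(t^2-1)=-1\}$. Since $t^2-1$ has the two distinct roots $\pm1$ (distinct because $p$ is odd), the classical evaluation of the Legendre-symbol sum of a quadratic gives $\sum_{t\in\F_p}\chi(t^2-1)=-\chi(1)=-1$. Partitioning $\F_p$ according to whether $\chi(t^2-1)$ equals $1$, $0$, or $-1$, and noting that $t^2-1=0$ for exactly the two values $t=\pm1$, we get $S-N=-1$ and $S+N=p-2$, where $S$ counts the $t$ with $t^2-1$ a nonzero square. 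Hence $N=(p-1)/2$, giving $\cN_{p,2}=(p-1)/2$.

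The part requiring the most care — rather than a genuine obstacle — is the structural classification in the first step: checking that ``the leaf $-x$ has no preimage'' is both necessary and sufficient for the component to be exactly $\{x,-x\}$, and verifying that the degenerate values $x=0$ and $x=2$ (corresponding to $a=0$ and $a=-2$) really do fail to produce a size-$2$ component, so that discarding them is harmless. Once this is settled, the remaining character-sum computation is routine.
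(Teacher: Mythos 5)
Your proof is correct and follows the same structural reduction as the paper's: a size-$2$ component is $\{x,-x\}$ with $x$ a fixed point of $f_a$, the condition for the component to close up is that $-x-a=x^2-2x$ be a non-square, and your careful exclusion of a genuine $2$-cycle fills in a step the paper dismisses as ``easy to see.'' The only divergence is the final count: you evaluate $\#\{x\in\F_p:\ x^2-2x \text{ a nonzero non-square}\}$ via the complete Legendre-symbol sum $\sum_{t\in\F_p}\chi(t^2-1)=-1$, whereas the paper parametrizes the solutions of $x^2-1=y^2$ by $\alpha=x+y$ and counts the pairs $\{\alpha,\alpha^{-1}\}$; both computations are standard and yield $(p-1)/2$.
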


\begin{proof}
If $C$ is a component of $\cG_a$ of size $2$,
then it is easy to see that $C=\{x,-x\}$ for some $x\in \F_p$ such that
$x$ is a fixed point (that is, $x^2+a=x$) and the equation $X^2+a=-x$ has no solution in $\F_p$ (that is, $-x-a$ is not a square).

In other words, for any $x\in \F_p$, if we choose $a=-x^2+x$, then $x$ is a fixed point in $\cG_a$
and $-x-a=x^2-2x$.
So, it is equivalent to count how many $x\in \F_p$ such that $x^2-2x$ is not a square in $\F_p$.
Since $x^2-2x=(x-1)^2-1$, it is also equivalent to count how many $x\in \F_p$ such that $x^2-1$ is not a square in $\F_p$.

If $x^2-1$ is a square in $\F_p$, say $x^2-1=y^2$, then we have $(x+y)(x-y)=1$.
Let $\alpha=x+y$, then $x-y=\alpha^{-1}$, and so
$$
x = \frac{\alpha+\alpha^{-1}}{2},  \qquad     y = \frac{\alpha-\alpha^{-1}}{2}.
$$
So, for such pairs $(x,y)$ we obtain a one-to-one correspondence between pairs $(x,y)$ and pairs $(\alpha,\alpha^{-1}),\alpha \ne 0$.
It is easy to see that for any $\alpha_1,\alpha_2\in \F_p^*$,
$$
\textrm{$\frac{\alpha_1+\alpha_1^{-1}}{2}=\frac{\alpha_2+\alpha_2^{-1}}{2}$ if and only if $\alpha_1\alpha_2=1$.}
$$
So, by counting the pairs $(\alpha,\alpha^{-1})$, there are $(p+1)/2$ values of $x$ such that $x^2-1$ is a square.
Therefore,  there are $(p-1)/2$ values of $x$ such that $x^2-1$ is not a square.
This completes the proof.
\end{proof}

It has been predicted in~\cite[Theorem~2~(i)]{FO2} that
\[
  {\mathcal N}_p \sim \frac{p\log p}{2},
\]
which has a small bias (about $9.5\%$) over the real value; see~\cite[Table~4.2]{KLMMSS}.
Here, we improve the precision of this estimate.
First, we note that each node in ${\mathcal G}_a$ has in-degree two or zero
except for the node $a$, since only $0$ maps to $a$.
Therefore, each component in any graph ${\mathcal G}_a$ has an even number of nodes unless it is the
component containing $0$ and $a$.
So, each graph ${\mathcal G}_a$ has exactly one component of odd size.
It follows that
$$
{\mathcal N}_{p,\text{odd}}^p = p,
$$
and so
$$
\cN_p \sim \cN_{p,\text{even}}^p, \quad \textrm{as $p \to \infty$}.
$$

For even-sized components, the situation is not as straightforward.
In our experiments, we noticed that the number of even-sized components with
size $k$ is very close to $p/k$ as shown in Table~\ref{tab:component_size_dist} for $k \le 20$ and for
$k = 1000$ and $2000$ (i.e., even for larger values of $k$).

\begin{table}[H]
{\small
	\begin{tabular}{r r r r r r r}
	\hline \\[-2.2ex]
	\multicolumn{1}{r}{$k$} &
	\multicolumn{2}{c}{$p =  100,003$} &
	\multicolumn{2}{c}{$p =  500,009$} &
	\multicolumn{2}{c}{$p = 1,000,003$} \\
  & ${\mathcal N}_{p,k}$ & \multicolumn{1}{c}{$\fl{p/k}$}
  & ${\mathcal N}_{p,k}$ & \multicolumn{1}{c}{$\fl{p/k}$}
  & ${\mathcal N}_{p,k}$ & \multicolumn{1}{c}{$\fl{p/k}$}  \\
	\hline\\[-1.5ex]
     2 & 50,001 & 50,001 & 250,004 & 250,004  & 500,001 & 500,001  \\
     4 & 24,951 & 25,000 & 125,160 & 125,002  & 250,171 & 250,000  \\
     6 & 16,156 & 16,667 &  83,185 &  83,334  & 166,660 & 166,667  \\
     8 & 12,509 & 12,500 &  62,652 &  62,501  & 124,727 & 125,000  \\
    10 & 10,083 & 10,000 &  50,422 &  50,000  &  99,975 & 100,000  \\
    12 &  8,389 &  8,333 &  41,542 &  41,667  &  82,577 &  83,333  \\
    14 &  7,192 &  7,143 &  35,661 &  35,714  &  71,611 &  71,428  \\
    16 &  6,292 &  6,250 &  31,186 &  31,350  &  62,220 &  62,500  \\
    18 &  5,503 &  5,555 &  27,941 &  27,778  &  55,923 &  55,555  \\
    20 &  5,009 &  5,000 &  24,662 &  25,000  &  50,135 &  50,000  \\
  1000 &   117 &   100 &    533 &    500  &    954 &   1,000  \\
  2000 &    48 &    50 &    243 &    250  &    489 &    500  \\
	\hline\\
	\end{tabular}
}
\caption{Number of components of size $k$}
\label{tab:component_size_dist}
\end{table}

Now, using $\fl{p/k}$ as an approximation of the number of components of size $k$ for any even $k<p$,
we can get an approximation for $\cN_{p,\text{even}}^p$.
First, when $(p-1)/2 < k < p $, we have $\fl{p/k}=1$, and there are about $(p-1)/4$ values of such even $k$.
In general, if $(p-1)/(n+1) < k \le (p-1)/n $, we have $\fl{p/k}=n$, and there are about $\frac{p-1}{2n(n+1)}$ values of such even $k$,
which contributes to around $\frac{p-1}{2(n+1)}$ components of even size.

Fixing a positive integer $n$, for $k>(p-1)/(n+1)$ we use the above estimate, while for $k\le (p-1)/(n+1)$ we use the estimate $(p-1)/k$,
and so the total number of components of even size is around
\begin{align*}
\frac{p-1}{2}&\(1+\frac{1}{2} + \cdots + \frac{1}{(p-1)/(2(n+1))}\) \\
& \qquad \qquad + \frac{p-1}{2} \(\frac{1}{2} + \frac{1}{3} + \cdots + \frac{1}{n+1}\),
\end{align*}
which, together with the approximation of the harmonic series, is approximated by
\begin{align*}
\frac{p-1}{2}  \(\log\frac{p-1}{2(n+1)}+\gamma\) + \frac{p-1}{2}\(-1+\log(n+1)+\gamma\) \qquad &\\
 = \frac{p-1}{2}\( \log(p-1) +2\gamma -1 - \log 2 \)&,
\end{align*}
where $\gamma = 0.5772156649\dots$ is the Euler constant.
So, we denote
$$
\widetilde{\mathcal N}_{p,\text{even}}^p = \frac{p-1}{2}\( \log(p-1) +2\gamma -1 - \log 2 \),
$$
which is an approximation of ${\mathcal N}_{p,\text{even}}^p$.

Table~\ref{tab:components_small_size} shows the difference between the two values for several large primes.
We overestimate the actual value by about 2\%.

\begin{table}[H]
{\small
	\begin{tabular}{r c c c c}
	\hline \\[-2.2ex]
  \multicolumn{1}{c}{$p$} &
  ${\mathcal N}_{p,\text{even}}^K$ &
  ${\mathcal N}_{p,\text{even}}^p$ &
  $\cN_p$ &
  $\widetilde{\mathcal N}_{p,\text{even}}^p$ \\
	\hline\\[-1.5ex]
  100,003 &  521,337 &  538,640 &   638,643 &548,722\\
  200,003 & 1,113,083 & 1,147,694 &  1,347,697 &1,166,748 \\
  300,007 & 1,730,420 & 1,782,805 &  2,082,812 &1,810,962 \\
  400,009 & 2,364,734 & 2,434,894 &   2,834,903 &2,472,154\\
  500,009 & 3,011,626 & 3,098,914 &  3,598,923 &3,145,966\\
  600,011 & 3,667,637 & 3,772,277 &  4,372,288 &3,829,859\\
  700,001 & 4,333,622 & 4,455,913 &  5,155,914 &4,522,041\\
  800,011 & 5,005,995 & 5,145,194 &  5,945,205 &5,221,530\\
  900,001 & 5,685,731 & 5,842,337 &  6,742,338 &5,927,145\\
 1,000,003 & 6,369,257 & 6,543,317 & 7,543,320  &6,638,411\\
	\hline\\
	\end{tabular}
}
\caption{Estimates for the number of components with even size and $K = (p-1)/2$}
\label{tab:components_small_size}
\end{table}

\section{Shape of trees in functional graphs}
\label{sect:tree}

Finally, in order to reveal more detailed features of functional graphs, we consider the trees attached to such graphs.

In the functional graph $\cG_a$ corresponding to $f_a$, each node in a cycle, except for $a$ (if $a$ lies in a cycle), is connected to a unique node (say $w$)
which is not in the cycle.
Naturally, we treat the node $w$ as the root of the binary tree attached to a cyclic point in the graph $\cG_a$.
Thus, we can say that each node in a cycle of $\cG_a$, expect for $a$, is associated with a binary tree --
in fact a full binary tree, unless $0$ is a node in the tree.
For example, in Figure~\ref{pic:connected_graph}, there are $8$ full binary trees attached to the cyclic points.
Let $t_p(a,k)$ be the number of such binary trees with $k$ nodes in $\cG_a$, and let
\[
  T_{p}(k) = \sum_{a \in \F_p} t_p(a,k) \quad \text{and} \quad
  T_p = \sum_{k=1}^{p-1} T_p(k);
\]
and for the connected graphs equivalents, let
\[
  T^*_{p}(k) = \sum_{a \in \cI_p} t_p(a,k) \quad \text{and} \quad
  T_p^* = \sum_{k=1}^{p-1} T_p^*(k).
\]
Note that $T_p$ is the total number of trees attached to all such functional graphs $\cG_a$,  
and $T_p^*$ has a similar meaning but with restriction to connected functional graphs. 

An interesting question is whether these trees behave similarly to random full binary trees.
First we observe that there is a significant proportion of trees with just one node, as shown
in Table~\ref{table:tree_numbers} for the general case and
in Table~\ref{table:tree_numbers_connected} for connected graphs.
This motivates us to pose the following conjecture, which seems to be reasonable because exactly
half of elements in $\F_p^*$ are not square.

\begin{conj}
We have $T_p(1)/T_p \sim 1/2$ as $p \to \infty$.
\end{conj}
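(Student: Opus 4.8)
The plan is to count, over all $a \in \F_p$, the trees with exactly one node and compare with the total number of trees $T_p$. A tree of size one attached to a cyclic point $v$ means that $v$ has in-degree one in $\cG_a$: the unique preimage of $v$ on the cycle is its predecessor, and the off-cycle root $w$ — which by our convention is the predecessor of $v$ outside the cycle — must itself have in-degree zero, i.e. $w$ is a leaf. Actually it is cleaner to reason directly: a one-node tree attached to the cyclic point $v$ (with $v \neq a$) occurs exactly when $w := $ the non-cyclic preimage of $v$ exists and $w$ has no preimages, i.e. $X^2 + a = w$ is insoluble, i.e. $w - a$ is a non-square in $\F_p$. Since each non-$a$ cyclic point $v$ has exactly two preimages (one on the cycle, one off, namely $\pm$ a square root of $v - a$), and $w$ is one of $\{r, -r\}$ where $r^2 = v-a$, the condition "$w-a$ is a non-square" should be translated into a condition on $v$ and $a$ and then summed.

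First I would set up the count $T_p(1) = \#\{(a,v) : v \text{ is a non-}a \text{ cyclic point of } \cG_a \text{ and its attached tree has size } 1\}$. I would parametrize cyclic points: $v$ is cyclic for $\cG_a$ iff $f_a^{(\ell)}(v) = v$ for some $\ell$; but it is more efficient to sum over the off-cycle neighbour $w$ of $v$. Note $f_a(w) = v$, so given $w$ and $a$ we recover $v = w^2 + a$; the pair $(a,w)$ contributes to $T_p(1)$ iff (i) $v = w^2+a$ is a cyclic point, (ii) $w$ is \emph{not} itself cyclic (so that $w$ is genuinely the tree root, not on the cycle), and (iii) $w - a$ is a non-square (so $w$ is a leaf). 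Condition (iii) depends only on $w - a$; writing $a = w - s$ with $s$ a non-square, condition (iii) holds for exactly $(p-1)/2$ values of $a$ for each fixed $w$. The main term $\tfrac12$ should come from the fact that conditions (i) and (ii) together are satisfied for "most" pairs, and the restriction (iii) is a square/non-square condition roughly independent of the cycle structure — which is exactly the heuristic stated in the paper.

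Making this rigorous is the hard part. The clean comparison is between $T_p(1)$ and the total number $C_p^{\mathrm{tot}} := \sum_a (C_a - [a \text{ cyclic}])$ of non-$a$ cyclic points summed over $a$, since each such cyclic point carries exactly one tree, so $T_p = C_p^{\mathrm{tot}}$. One then wants to show that among these $C_p^{\mathrm{tot}}$ pairs $(a,v)$, asymptotically half have $w - a$ a non-square, where $w$ is the off-cycle preimage. The obstacle is that $w$ is \emph{defined} in terms of the cyclic structure (it's the square root of $v-a$ whose image is not a cyclic predecessor of $v$), and whether $w - a$ is a square is a quadratic condition on $w$, hence a quartic-type condition on $(a,v)$; correlations between "being cyclic" and this quadratic character are not obviously negligible. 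A natural route is to stratify by the cycle length $\ell$ and use Proposition~\ref{prop:Ck}: the number of cyclic points in cycles of length $\ell$, summed over $a$, is $\ell \cC_\ell \sim p$, and within each such stratum one estimates the proportion with $w-a$ a non-square. For fixed small $\ell$ this is a character sum over the affine variety $F_a^{(\ell)}(v) = 0$ of a Legendre symbol $\left(\tfrac{w-a}{p}\right)$ expressed via the coordinates, which can be bounded by Weil-type estimates provided the relevant curve is not "special" (e.g. the character is not a square of a rational function on the curve). Summing the resulting $\tfrac12 \ell \cC_\ell + O(\ell \, 4^{O(\ell)} p^{1/2})$ over $\ell$ and using the tail bound $\cC_\ell = 0$ for $\ell \gg p$ (and the better bounds of~\cite{PMMY}) to control the range of $\ell$, one gets $T_p(1) = \tfrac12 T_p + o(T_p)$ once $T_p \sim \sqrt{\pi p /2}\cdot p$ is large compared to the accumulated error — and here one must be careful that the per-$\ell$ error does not overwhelm the main term when summed, which likely forces a cutoff $\ell \le c \log p$ with the longer cycles handled crudely by the trivial bound on how many cyclic points can lie on long cycles. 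I expect this last balancing of the error terms across all cycle lengths, together with verifying the non-degeneracy of the character sums uniformly in $\ell$, to be the crux; the structural identity $T_p = C_p^{\mathrm{tot}}$ and the leaf-condition reduction to a single Legendre symbol are routine.
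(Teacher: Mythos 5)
This statement is a \emph{conjecture} in the paper: the authors give no proof, only the numerical evidence of Table~\ref{table:tree_numbers} and Table~\ref{table:tree_numbers_connected} together with the one-line heuristic that exactly half of the elements of $\F_p^*$ are non-squares. Your structural reduction is a correct formalization of that heuristic: a one-node tree hangs off a cyclic point $v\ne a$ exactly when its unique off-cycle preimage $w$ (namely $-u$, where $u$ is the cyclic predecessor of $v$) satisfies that $w-a$ is a non-square, and $T_p$ is the total count of non-$a$ cyclic points over all $a$. Up to that point you are faithful to what the paper intends.

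The gap is in the passage to rigor, and it is not primarily the character-sum nondegeneracy you flag but a more basic counting obstruction. Your plan stratifies by cycle length $\ell$, proves $\tfrac12\,\ell\,\cC_\ell + O\bigl(4^{O(\ell)}p^{1/2}\bigr)$ per stratum via Weil bounds on the curve $F_a^{(\ell)}(v)=0$, and handles long cycles ``crudely''. But the Weil-type estimates become vacuous once $4^{O(\ell)}$ exceeds $p^{1/2}$, i.e.\ once $\ell \gg \log p$, so the strata you can control contribute only $O(p\log p)$ pairs $(a,v)$ in total, since each stratum has mass $\ell\,\cC_\ell \sim p$ by Proposition~\ref{prop:Ck}. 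Meanwhile the numerics and the random-map model indicate $T_p = \sum_a C_a + O(p) \approx p\sqrt{\pi p/2} \asymp p^{3/2}$: almost all of the mass of $T_p$ sits on cycles of length up to about $\sqrt{p}$, far outside the reachable range. Hence ``handling the long cycles crudely'' does not produce an error term --- it leaves the overwhelming majority of the quantity to be estimated untouched, and no cutoff $\ell \le c\log p$ can be balanced against it. A subsidiary issue is that even the normalization $T_p \asymp p^{3/2}$ on which your error analysis leans is itself only conjectural: the paper has no proven asymptotic, nor even a lower bound of that order, for $\sum_a C_a$. So the proposal correctly isolates where the factor $1/2$ comes from, but it does not --- and with these tools cannot --- upgrade the conjecture to a theorem, which is presumably exactly why the authors state it as a conjecture.
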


\begin{table}[H]
{\small
	\begin{tabular}{r r r r}
	\hline \\[-2.2ex]
	\multicolumn{1}{c}{$p$} &
	\multicolumn{1}{c}{$T_p(1)$} &
	\multicolumn{1}{c}{$T_p$} &
	\multicolumn{1}{c}{\%} \\
	\hline\\[-1.5ex]
   50,111 &   7,090,084 &    14,091,820 & 50.31\% \\
  100,003 &  19,845,915 &    39,530,737 & 50.20\% \\  %
  200,003 &  56,210,936 &   112,088,213 & 50.15\% \\  
  300,007 & 103,203,596 &   205,901,181 & 50.12\% \\ 
  400,009 & 158,746,944 &   317,089,081 & 50.06\% \\  
  500,009 & 221,941,725 &   443,336,032 & 50.06\% \\  
1,000,003 & 627,460,216 & 1,253,326,817 & 50.06\% \\ 
	\hline\\
	\end{tabular}
  }
  \caption{Number of trees with one node}
 \label{table:tree_numbers}
  \end{table}

  \begin{table}[H]
  {\small
  	\begin{tabular}{r r r r}
  	\hline \\[-2.2ex]
  	\multicolumn{1}{c}{$p$} &
  	\multicolumn{1}{c}{$T_p^*(1)$} &
  	\multicolumn{1}{c}{$T_p^*$} &
  	\multicolumn{1}{c}{\%} \\
  	\hline\\[-1.5ex]
       50,111 &    27,877 &    55,668 & 50.08\% \\
      100,003 &    52,923 &   105,612 & 50.11\% \\  %
      200,003 &   115,746 &   231,583 & 49.98\% \\  %
      300,007 &   161,975 &   323,410 & 50.08\% \\ %
      400,009 &   222,865 &   445,931 & 49.98\% \\  %
      500,009 &   298,060 &   595,142 & 50.08\% \\  %
    1,000,003 &   542,592 & 1,086,147 & 49.96\% \\ %
  	\hline\\
  	\end{tabular}
    }
    \caption{Number of trees with one node in connected graphs}
   \label{table:tree_numbers_connected}
    \end{table}

Second, for large trees, we check the average height of the trees in the graphs.
It has been shown in~\cite[Theorem~B]{FO} that the average height of full binary trees with $n$ internal nodes is
\[
	\overline{H}_n \sim 2\sqrt{\pi n} \qquad \text{as $n \rightarrow \infty$.}
\]
This means that for a random full binary tree, its height is asymptotic to $2\sqrt{\pi n}$ when
$n$ goes to the infinity.
In our situation, for each tree with $n$ internal nodes and height $H_n$, we compute the ratio
$H_n/2\sqrt{\pi n}$ and find the average of this ratio for all graphs modulo $p$.
(Again, a tree is not always guaranteed to be a full binary tree, since $0$ might be a node
in the tree, but the impact of this happening is negligible,
and at any case, we collect trees of both sizes $2n$ and $2n+1$.)

In Table~\ref{table:tree_heights}, we compare the  ratio of $\overline{H}_n/2\sqrt{\pi n}$
(see~\cite[Table~II]{FO}) with the average
ratio of $H_n/2\sqrt{\pi n}$ of the trees in our graphs.
One can see that they are close.

\begin{table}[H]
{\small
	\begin{tabular}{r r r r r}
	\hline \\[-2.2ex]
	\multicolumn{1}{c}{$n$} &
	\multicolumn{1}{c}{$\overline{H}_n/2\sqrt{\pi n}$} &
	\multicolumn{3}{c}{average of $H_n/2\sqrt{\pi n}$} \\
	& & $p = 50111$ & $p=100003$ & $p=200003$\\
	\hline\\[-1.5ex]
		50		& 0.797 & 0.837  & 0.837  & 0.837  \\
		100   & 0.846 & 0.875  & 0.873  & 0.872\\
		500   & 0.920 & 0.952  & 0.925  & 0.941 \\
		1,000	& 0.940 & 0.925  & 0.948  & 0.942 \\
		2,000  & 0.956 & 0.981  & 0.944  & 0.960\\
		5,000  & 0.970 & 0.927  & 0.916  & 0.977 \\
	\hline\\
	\end{tabular}
}
\caption{Average height of trees}
\label{table:tree_heights}
\end{table}

\section{Future Directions}

One of the most important   directions in this area is  developing an adequate
random model predicting the statistical characteristics of the functional graphs of polynomials,
see~\cite{MaPa} for some initial, yet promising
results in this direction.

Based on our computations, we pose several conjectures about the functional graphs of quadratic polynomials.
Investigating whether they are true or not may help to characterise functional graphs generated by quadratic polynomials and  understand the similarities and differences between these functional graphs and random mappings.

The other interesting problem is to count the number of functional graphs modulo $p$ generated by quadratic polynomials up to isomorphism;
see~\cite[Theorem~2.8]{KLMMSS} for a lower bound.
In~\cite[Conjecture~C]{GKRS} the authors conjectured that for any odd prime $p\ne 17$, there are $p$ such functional graphs up to isomorphism,
and they confirmed this for all the odd primes up to 1009 not equal to $17$. Under our computations, we confirm this conjecture
for all the odd primes up to 100000 not equal to $17$.

\section*{Acknowledgements}

The authors are grateful to Patric Morton and Michael Zieve for several useful suggestions and
literature references,  especially concerning dynatomic polynomials.

For the research, B.M. was partially supported
by the  Australian Research Council Grants DP140100118 and DP170102794,  M.S. by the
Macquarie University Research Fellowship, I.S. by the
Australian Research Council Grants~DP130100237  and DP140100118.

\end{document}